\title[The distribution of the multiplicative index]{The distribution of the multiplicative index\\ of algebraic numbers over residue classes}
\date{}
\author[P. Moree, A. Perucca, P. Sgobba]{Pieter Moree}
\address{Max-Planck-Institut f\"ur Mathematik \\ Vivatsgasse 7, D-53111 Bonn, Germany}
\email{moree@mpim-bonn.mpg.de}
\author[]{Antonella Perucca}
\address{Department of Mathematics, University of Luxembourg\\ 6 Avenue de la Fonte, L-4364 Esch-sur-Alzette, Luxembourg}
\email{antonella.perucca@uni.lu}
\author{Pietro Sgobba}
\thanks{\emph{Corresponding author:} Pietro Sgobba}
\address{Department of Pure Mathematics, Xi'an Jiaotong--Liverpool University\\ 111 Ren’ai Road, Suzhou 215123, China}
\email{pietro.sgobba@xjtlu.edu.cn}
\subjclass[2020]{Primary: 11R45; Secondary:  11A07, 11R44} 
\keywords{Reductions of algebraic numbers, multiplicative index and order, primes in arithmetic progression, natural density}
\newtheorem {thm}{Theorem}
\newtheorem* {thm*}{Theorem}
\newtheorem {cor}[thm]{Corollary}
\newtheorem* {cor*}{Corollary}
\newtheorem {lem}[thm]{Lemma}
\newtheorem {prop}[thm]{Proposition}
\newtheorem {rem}[thm]{Remark}
\theoremstyle{definition}
\newtheorem {exa}[thm]{Example}
\newtheorem* {conj*}{Conjecture}
\newtheorem* {quest*}{Question}
\newcommand{\mc}{\mathcal}
\newcommand{\p}{\mathfrak{p}}
\newcommand{\Q}{\mathbb{Q}}
\newcommand{\Z}{\mathbb{Z}}
\DeclareMathOperator{\dens}{dens}
\DeclareMathOperator{\ind}{ind}
\DeclareMathOperator{\ord}{ord}
\DeclareMathOperator{\Li}{Li}
\DeclareMathOperator{\Gal}{Gal}
\DeclareMathOperator{\Frob}{Frob}
\DeclareMathOperator{\id}{id}
\DeclareMathOperator{\N}{N}
\DeclareMathOperator{\lcm}{lcm}
\newcommand{\bo}[1]{O\left( #1 \right)}
\newcommand{\av}[1]{\left\lvert#1\right\rvert}
\begin{document}

\maketitle
\begin{abstract}
Let $K$ be a number field and $G$ a finitely generated torsion-free subgroup of $K^\times$. Given a prime $\p$ of $K$ 
we denote by $\ind_\p(G)$ the index of the subgroup $(G\bmod\p)$ of the multiplicative group of the residue field at $\p$. Under the Generalized Riemann Hypothesis we determine the natural density of primes of $K$ for which this index is in a prescribed set $S$ and has prescribed Frobenius in a finite Galois extension $F$ of $K$. We study in detail the natural density in 
case  $S$ is an arithmetic progression, in particular its positivity.
\end{abstract}

\section{Introduction}
The distribution of the multiplicative index of an integer seems to have
been first studied by Pappalardi \cite{pappa} in 1995. Under the Generalized Riemann Hypothesis (GRH) he provided
asymptotic formulae for $\sum_{p\le x}f(\ind_p(g))$, for $f$ satisfying  fairly mild restrictions (here and in the sequel 
we denote the rational primes by $p$). This line of 
investigation was continued in 2012 by Felix and Murty 
\cite{FM} and later by Felix for higher rank in  \cite{Felixhigher}.
Given a set of integers $S$ and a natural number $g$, in \cite{FM} it was proven that
\begin{equation}\label{FelixMurty}
\pi_{g,S}(x)
:=\av{\{p\le x:\ind_g(p)\in S\}}=c_{g,S}\Li(x)+O\Big(\frac{x}{(\log x)^{2-\epsilon}}\Big),
\end{equation}
where $c_{g,S}$ is a constant defined by a series whose terms depend on the set $S$,
$\Li(x):=\int_2^xdt/\log t$ denotes the logarithmic integral and $\epsilon>0$ is arbitrary. It is a difficult problem to determine whether
$c_{g,S}$ is positive or not, cf.\,Felix \cite{Felix}. The special case
where $S$ is an arithmetic progression was already considered by Moree \cite[Thm.\,5]{moree} in 2005. For example, he proved 
Theorem 
\ref{densityaslinearcombination} below in case $G=\langle g\rangle$, $F=K=\mathbb Q$.
\par In this paper we consider the behavior of 
$\pi_{g,S}(x)$, with $\mathbb Q$ replaced by a number field $K$ and $g$ by a finitely generated torsion-free 
subgroup $G$ of $K^\times$. 
Instead of over rational primes, we sum now over primes $\p$ of norm $\le x$. Under GRH we establish 
in Theorem \ref{thm-index}, see Section \ref{sec:two},
an asymptotics similar to \eqref{FelixMurty}, but with a weaker error term depending on the rank of $G$. 
Notice that our result relies on variations for number fields of Hooley's proof of Artin's primitive root conjecture under the assumption of GRH \cite{hooley}.
In Section \ref{sec:APdensity} we then restrict to the case where $S$ consists of integers in an arithmetic progression $a\bmod d$. In 
Theorem \ref{densityaslinearcombination} we show that in this case the natural density can be expressed as a linear combination of
at most $\varphi(d')-1$ Artin-type constants, with $d'=d/(a,d)$. 
The positivity of the density is studied in Section \ref{sec:vanishing}, the numerical evaluation of the Artin-type
constants in 
Section \ref{EPconstants}. In the final section we 
demonstrate our results by determining the density for two examples and compare the outcome with
an experimental approximation.
\par We take $G$ to be fixed, but one can also ask what happens for a ``typical" $G$.
Ambrose \cite{ambrose} considered the average index of the group generated by a finite number of elements in the residue field at a prime of a number field and provided asymptotic formulae for the average order of this quantity.
\par Likewise we can wonder about the above questions, but for the multiplicative order, rather than the 
index. As far as the authors know, these were first studied by 
Chinen and Murata \cite{CM} for $d = 4$, and a little later
by Moree by a simpler method.
Both Chinen and Murata, and independently Moree, went on to
write various further papers (he surveyed his results in \cite{Moreeannounce}). Under an appropriate generalization of the Riemann Hypothesis it turns out that the natural density of primes $p\le x$ such
that the multiplicative order of $g$ modulo $p$ is 
congruent to $a\bmod d$ exists.
Denote it by $\delta_g(a,d)$ and the associated counting function
by $N_g(a,d)(x)$.
The proof of the existence of $\delta_g(a,d)$  by Moree is based on the identity
$$N_g(a,d)(x)=\sum_{t=1}^{\infty}\av{\{p\le x\,:\,\ind_p(g)=t,~p\equiv 1+ta \bmod dt\}}.$$
The average density of elements of order congruent to 
$a\bmod d$ in a field of prime
characteristic also 
exists, but is a much simpler quantity, see Moree \cite{moreeFF}. It has very similar features to
$\delta_g(a,d)$.
\par In the special case where $d$ divides $a$, 
we are just asking for the density of primes $p$ such
that $d$ divides the multiplicative order of $g$ modulo $p$. This density is much easier to
deal with and turns out to be a rational number. 
This can be proven unconditionally, 
see for example \cite{MoreeFACM,Wie1,Wie2}.  
\par Ziegler \cite{zieg}, using the approach of
Moree, was the first to study the order in
arithmetic progression problem in the setting of 
number fields. His work was generalized by Perucca and Sgobba in \cite{PeruccaSgobba,PeruccaSgobba2}, who obtained in particular uniformity results for the distribution of the order. It is expected that, likewise, some uniformity also holds for the distribution of the index into suitably related congruence classes, however at the moment it is not clear how to obtain such a result. For example, it does not follow from \cite[Cor.~5.2]{PeruccaSgobba}, in spite of the fact that congruence conditions on both the order and the size of the multiplicative group lead to congruence conditions on the index. We leave this as a research direction and as an open problem to the reader.

It is also still unknown whether this kind of results can be proven unconditionally. Although the results of this paper mostly rely on GRH,  there are fundamental papers on Artin's conjecture for primitive roots providing unconditional results, see for example \cite{guptamurty} by Gupta and Murty, and  \cite{HB} by Heath-Brown. 
However, the contrast between what can be proven conditionally versus unconditionally in this area is quite dramatic. We note though that the infinitude of primes $p$ in a prescribed arithmetic progression with $\ind_p(g)\ne t$, with $t$ prescribed, \emph{can} be unconditionally determined (however, not its density) \cite{MoreeSha}.
Last but not least, Pappalardi obtained  quantitative results without relying on GRH under certain convergence conditions, see for example \cite[Thm.~1]{pappa}, from which one can determine the density for $\ind_p(g)$ being squarefree (and, more generally, $k$-free with $k\geq2$).

\section{The existence of the density of primes with prescribed index and Frobenius}
\label{sec:two}
Let $K$ be a number field, and $F/K$ a finite Galois extension. Let $G$ be a finitely generated and torsion-free subgroup of $K^\times$ having positive rank $r$.
Our goal is to determine the density of the set $P$ of primes $\p$ of $K$ (defined in the next theorem) with prescribed
index and Frobenius.
The notation $F,K,G$ and $r$ will be maintained throughout. We also set $K_{m,n}:=K(\zeta_m,G^{1/n})$ for $m\mid n$, and similarly for $F_{m,n}$. 
Further we make use of the following usual notation: $\zeta_n$ denotes an $n$-th primitive root of unity, $\mu$ the Möbius function, 
and $\varphi$ Euler's totient function. We write $\log^a x$ as shorthand for $(\log x)^a$, and
$(a,b)$ for $\gcd(a,b)$.
\par We recall that Landau's prime ideal theorem states that
\begin{equation}
\label{PIT}    
\av{\left\{\p:\N\p\leq x\right\}}=\Li(x)+O_{K}(xe^{-c_{K}{\sqrt {\log x }}}),
\end{equation}
where $c_K>0$ is a constant depending on $K$. 
\begin{thm}\label{thm-index} {\rm (under GRH)}.
Let $K$ be a number field, and let $G$ be a finitely generated and torsion-free subgroup of $K^\times$ of positive rank $r$. 
Let $F/K$ be a finite Galois extension, and let $C$ be a union of conjugacy classes in $\Gal(F/K)$. Let $S$ be a non-empty set of positive integers.
Define
\[ P:=\{\p : \ind_\p(G)\in S,\, \Frob_{F/K}(\p)\in C \}, \]
where $\p$ ranges over the primes of $K$ unramified in $F$ and for which $\ind_\p(G)$ is well-defined.
We let $P(x)$ be the number of prime ideals in $P$ of norm $\le x$.
We have the asymptotic estimate
\[  P(x)  
 = \frac{x}{\log x}\sum_{t\in S}\,\sum_{v=1 }^\infty\frac{\mu(v)c(vt)}{[F_{vt,vt}:K]}+
 {\bo{\frac{x}{\log^{2-\frac{1}{r+1}}x}}}, \\
 \]
 where 
 \[ c(n)=\av{\{\sigma\in\Gal(F_{n,n}/K) : \sigma|_{K_{n,n}}=\id,\, \sigma|_F\in C  \}}\,. \]
 The implicit constant in the $O$-term depends only on $K$, $F$ and $G$.
 \end{thm}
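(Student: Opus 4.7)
The plan is to follow the standard Möbius--Chebotarev scheme pioneered by Hooley for Artin's conjecture and later adapted to the index problem by Pappalardi, Felix--Murty, and Moree.

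First I would use Möbius inversion to write
\[
P(x)=\sum_{t\in S}\sum_{v\geq 1}\mu(v)\,N(vt,x),
\]
where $N(n,x)$ counts primes $\p$ with $\N\p\leq x$, $n\mid\ind_\p(G)$, and $\Frob_{F/K}(\p)\in C$. The combined local condition is equivalent to the Chebotarev condition that $\Frob_{F_{n,n}/K}(\p)$ be trivial on $K_{n,n}=K(\zeta_n,G^{1/n})$ and restrict to $C$ on $F$, whose fibre in $\Gal(F_{n,n}/K)$ has size exactly $c(n)$. Setting $n=vt$, this also reads $P(x)=\sum_{n\geq 1}a_n\,N(n,x)$ with $a_n=\sum_{t\in S,\,t\mid n}\mu(n/t)$ and $|a_n|\leq d(n)$.

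Next I would apply the effective Chebotarev density theorem under GRH (Lagarias--Odlyzko/Serre) together with the standard Kummer bounds $[F_{n,n}:K]\asymp n^{r+1}$ and $\log d_{F_{n,n}}\ll[F_{n,n}:K]\log n$, obtaining
\[
N(n,x)=\frac{c(n)}{[F_{n,n}:K]}\Li(x)+O\bigl(c(n)\,x^{1/2}\log(nx)\bigr).
\]
I would then introduce a truncation parameter $y=y(x)$ and split the sum at $n=y$. For $n\leq y$, substituting the above produces the truncated version of the target main term with a Chebotarev error of size $\ll\sqrt{x}\,y^{1+\epsilon}\log x$; the discarded tail $n>y$ of the main term contributes $\ll\Li(x)\sum_{n>y}d(n)/n^{r+1}\ll x/(y^{r-\epsilon}\log x)$. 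Choosing $y\asymp\log^{1/(r+1)+\epsilon}x$ matches this tail with the target error $x/\log^{2-1/(r+1)}x$, since $2-1/(r+1)=1+r/(r+1)$, while the GRH error is dominated by any negative power of $\log x$ times $x$.

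The main obstacle is to bound the remaining piece $\sum_{n>y}a_n\,N(n,x)$ by the same target, since both $|a_n|$ and $N(n,x)$ can be relatively large for individual $n$. This is handled by a classical Hooley-type three-range splitting of $(y,\infty)$: on an intermediate range one can still apply GRH Chebotarev (its cumulative error remains subordinate for $n$ up to roughly $x^{1/(2(r+2))}$), while for very large $n$ one invokes unconditional estimates---Brun--Titchmarsh for primes $\p$ with $n\mid\N\p-1$, sharpened by the extra Kummer condition that each generator $g_i$ of $G$ be an $n$-th power modulo $\p$. The rank $r$ enters through the Kummer degree $[K_{n,n}:K]\asymp n^{r+1}$, whose decay rate $n^{-(r+1)}$ dictates the tail bound and hence produces the exponent $2-1/(r+1)$ in the error. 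Uniformity of the implied constant in $S$ and $C$ is automatic, since these data enter only through the bounded quantities $|a_n|\leq d(n)$ and $c(n)\leq[F:K]$.
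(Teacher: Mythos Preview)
Your outline is in the right spirit, but it diverges from the paper's argument in a way that leaves a real gap in the tail estimate.

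The paper does \emph{not} split by $n=vt$. It splits by the actual index value $t$: writing $P(x)=\sum_{t\in S}R_t(x)$, it truncates at $y=(\log x)^{1/(r+1)}$ and bounds the tail $\sum_{t>y,\,t\in S}R_t(x)$ trivially by the number of primes of norm $\le x$ with $\ind_\p(G)>y$, which is $O(x/\log^{2-1/(r+1)}x)$ by Lemma~\ref{lem13}. For $t\le y$ it quotes, as a black box, the asymptotic for $R_t(x)$ given in Theorem~\ref{thm-Rt} (i.e.\ \cite[Prop.~5.1]{PeruccaSgobba}); the entire Hooley three-range machinery is hidden inside that citation and the paper never revisits it. Summing the two error terms of Theorem~\ref{thm-Rt} over $t\le y$, and invoking \eqref{sharperestimate} for the tail of the main series, finishes the proof in a few lines.

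Your route instead sums over $n$ with weights $a_n=\sum_{t\mid n,\,t\in S}\mu(n/t)$ and proposes to run Hooley directly on $\sum_{n>y}a_nN(n,x)$. Here the weight is the problem. Hooley's treatment of the large range depends essentially on the summation variable being \emph{prime} (so that each $\p$ contributes to only $O(\log x/\log z)$ terms, and so that an order bound $\ord_\p(G)\le T$ confines the relevant $\p$); with $|a_n|\le d(n)$ over composite $n$ this structure disappears, and one is left to control $\sum_{\p:\,\ind_\p(G)>X}d_3(\ind_\p(G))$, where the pointwise bound $d_3\ll x^\epsilon$ is far too weak to close the estimate. The ``Brun--Titchmarsh sharpened by the Kummer condition'' you invoke is not an available tool: Brun--Titchmarsh alone gives only $N(n,x)\ll x/(\varphi(n)\log x)$, and $\sum_n d(n)/\varphi(n)$ diverges. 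Even the tail of your main term, $\sum_{n>y}d(n)/(\varphi(n)n^r)\asymp(\log y)^2/y^r$, already overshoots the paper's clean $y^{-r}$ by a factor $(\log\log x)^2$. The remedy is exactly what the paper does: split by $t$ rather than $n$, so that the large-$t$ tail becomes a single unweighted count of primes with large index, and let the Hooley argument act once per small $t$ inside the cited $R_t$ asymptotic.
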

\par This result in combination with the prime ideal theorem leads to the following corollary.
\begin{cor}
{\rm(under GRH)}. Let $S$ be a non-empty set of positive integers. The natural density of the primes $\p$ of $K$ such that $\ind_\p(G)\in S$ and $\Frob_{F/K}(\p)\in C$ exists and  is given by
\[ \sum_{t\in S} \ \sum_{v=1}^\infty\frac{\mu(v)c(vt)}{[F_{vt,vt}:K]}\,. \]
\end{cor}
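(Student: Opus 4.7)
The plan is to derive the natural density by directly dividing the asymptotic for $P(x)$ from Theorem \ref{thm-index} by the Landau prime ideal estimate \eqref{PIT}. By definition, the natural density of $P$ equals
\[
\lim_{x\to\infty}\frac{P(x)}{|\{\p : \N\p\leq x\}|},
\]
provided this limit exists, so the task reduces to showing that this ratio converges to the claimed double sum.

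Applying integration by parts to $\Li(x)=\int_2^x dt/\log t$ gives $\Li(x)=x/\log x + O(x/\log^2 x)$, and combining this with \eqref{PIT} yields
\[
|\{\p : \N\p\leq x\}| = \frac{x}{\log x}\Bigl(1+O\bigl(1/\log x\bigr)\Bigr).
\]
Substituting the asymptotic of Theorem \ref{thm-index} into the numerator, the ratio becomes
\[
\sum_{t\in S}\sum_{v=1}^\infty\frac{\mu(v)c(vt)}{[F_{vt,vt}:K]} + O\Bigl(\frac{1}{\log^{1-1/(r+1)}x}\Bigr).
\]
Since the rank $r$ is positive by hypothesis, the exponent $1-1/(r+1)\geq 1/2$ is strictly positive, so the error term tends to $0$ as $x\to\infty$. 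This establishes both the existence of the natural density and its stated value.

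No substantive obstacle arises here: Theorem \ref{thm-index} has already supplied the main analytic estimate, and the corollary is simply the standard translation between a prime-counting asymptotic with main term $C\cdot x/\log x$ and a natural density equal to $C$. The only point that requires a brief remark is that the error exponent $1-1/(r+1)$ is positive, which uses the assumption $r\geq 1$; and that the primes omitted from $P$ (those ramified in $F$ or those at which $\ind_\p(G)$ is undefined) form a finite set and hence contribute nothing to the density.
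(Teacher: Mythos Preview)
Your proof is correct and follows exactly the approach indicated in the paper, which simply states that the corollary results from combining Theorem \ref{thm-index} with the prime ideal theorem. You have merely made explicit the routine details (the positivity of the exponent $1-1/(r+1)$ and the finiteness of the excluded primes) that the paper leaves implicit.
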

\par We will now formulate some preliminaries required
for the proof of Theorem \ref{thm-index}. Our starting point is \cite[Prop.~5.1]{PeruccaSgobba},
which was established  for rank 1 in \cite[Prop.~1]{zieg}.
\begin{thm}
\label{thm-Rt}
{\rm(under GRH)}.
For $x\geq t^{3}$, the number $R_t(x)$ of primes $\p$ with norm up to $x$, unramified in $F$, and such that $\ind_\p(G)=t$ and $\Frob_{F/K}(\p)\in C$  satisfies
\[ R_t(x)=\Li(x)\sum_{v=1}^\infty\frac{\mu(v)c(vt)}{[F_{vt,vt}:K]}+O\left(\frac{x}{\log^2 x}\right)
+O\left(\frac{x\log \log x}{\varphi(t)\log^2 x}\right). \]
The implicit constant in the $O$-term depends only on $K$, $F$ and $G$.
\end{thm}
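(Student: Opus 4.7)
The plan is to follow the Hooley--Ziegler template, adapted to the rank-$r$ setting. The starting point is the M\"obius inversion identity
\[ \mathbf{1}[\ind_\p(G)=t] \;=\; \sum_{v\ge 1} \mu(v)\,\mathbf{1}[vt\mid \ind_\p(G)], \]
paired with the standard translation: for $\p$ unramified in $F_{vt,vt}$, the divisibility $vt\mid \ind_\p(G)$ holds exactly when $\p$ splits completely in $K_{vt,vt}/K$, since this encodes both $vt\mid \N\p-1$ and that every generator of $G$ is a $vt$-th power modulo $\p$. Incorporating the Frobenius restriction $\Frob_{F/K}(\p)\in C$ then turns ``$vt\mid \ind_\p(G)$ and $\Frob_{F/K}(\p)\in C$'' into a Chebotarev condition in $F_{vt,vt}/K$ on an explicit set of $c(vt)$ elements, yielding
\[ R_t(x) \;=\; \sum_{v\ge 1}\mu(v)\,\pi_{vt}(x), \]
where $\pi_{vt}(x)$ is the number of primes of $K$ of norm $\le x$ unramified in $F_{vt,vt}$ whose Frobenius lies in that set.

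I would then truncate the outer sum at a cut-off $V=V(x,t)$ to be chosen later. For $v\le V$, the GRH-conditional effective Chebotarev density theorem of Lagarias--Odlyzko gives
\[ \pi_{vt}(x) \;=\; \frac{c(vt)}{[F_{vt,vt}:K]}\Li(x)\;+\; O\!\left(c(vt)\,x^{1/2}\log\!\bigl(d_{F_{vt,vt}}\,x^{[F_{vt,vt}:\mathbb{Q}]}\bigr)\right), \]
where $d_L$ denotes absolute discriminant. Combining the uniform lower bound $[K_{vt,vt}:K]\gg_{G,K}\varphi(vt)(vt)^r$ (from Kummer theory over cyclotomic fields) with the estimate $\log d_{F_{vt,vt}}\ll [F_{vt,vt}:\mathbb{Q}]\log(vt)$ from the conductor--discriminant formula, the sum of Chebotarev errors over $v\le V$ is dominated by a constant times $\sqrt{x}\,V^{2r+2}\log(Vtx)$. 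For $v>V$, I drop the Frobenius refinement and bound $|\pi_{vt}(x)|$ by the count of primes splitting completely in $K_{vt,vt}/K$, which GRH gives as $\Li(x)/[K_{vt,vt}:K]+\text{error}$. The resulting tail, together with the tail of the main series $\Li(x)\sum_{v>V}\mu(v)c(vt)/[F_{vt,vt}:K]$, is of size $\Li(x)/(\varphi(t)\,t^r V^r)$ once the factor $1/\varphi(t)$ coming from the cyclotomic degree $[K(\zeta_t):K]$ has been extracted.

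With $V$ chosen as a sufficiently large power of $\log x$, namely $V\asymp (\log x)^A$ with the exponent $A$ calibrated so that $\sqrt{x}\,V^{2r+2}\log x\ll x/\log^2 x$, the two error sources balance and produce the claimed bound, the extra $\log\log x$ factor in the second error term arising from the careful summation of the tail estimates against reciprocal-degree sums of $1/\varphi$ type once the $\varphi(t)$ has been isolated. The hypothesis $x\ge t^3$ ensures that $V\ge 1$ for the relevant range of $t$ and that the $t$-dependence does not swallow the main term.

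The main obstacle is producing the uniform lower bound $[K_{vt,vt}:K]\gg_{G,K}\varphi(vt)(vt)^r$ with implied constant depending only on $G$ and $K$, and then extracting from the tail estimate the sharp $1/\varphi(t)$ factor (rather than a weaker $1/\varphi(vt)$) that appears in the second error term. This rests on a careful analysis of the entanglement between the cyclotomic tower $\{K(\zeta_n)\}$ and the Kummer tower $\{K(G^{1/n})\}$, via Kummer-theoretic results of the kind developed by Schinzel and subsequently refined by Perucca and collaborators, so that the constants remain clean enough to separate the $v$ and $t$ dependencies. Once this degree estimate is in hand, the remainder is a routine but bookkeeping-intensive balance of Chebotarev errors against tail decay.
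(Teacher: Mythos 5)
The paper does not prove this statement at all: it is quoted verbatim as \cite[Proposition 5.1]{PeruccaSgobba} (established for rank $1$ in \cite[Proposition 1]{zieg}), so your proposal has to be judged as a reconstruction of that proof. Your overall template is the right one --- the M\"obius identity $\mathbf{1}[\ind_\p(G)=t]=\sum_v\mu(v)\mathbf{1}[vt\mid\ind_\p(G)]$, the translation of $vt\mid\ind_\p(G)$ plus the Frobenius condition into a Chebotarev condition on $c(vt)$ elements of $\Gal(F_{vt,vt}/K)$, effective Chebotarev under GRH for small $v$, and the Kummer-degree lower bound $[K_{n,n}:K]\gg_{K,G}\varphi(n)n^r$ (which is exactly \cite[Theorem 1.1]{PeruccaSgobba}, the bound on $C(n)$ used elsewhere in this paper) are all genuine ingredients of the actual argument.

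The gap is in your treatment of the tail $v>V$. You bound $|\pi_{vt}(x)|$ for every $v>V$ by the GRH count of primes splitting completely in $K_{vt,vt}/K$, namely $\Li(x)/[K_{vt,vt}:K]+\text{error}$, and then claim the total tail is $\ll\Li(x)/(\varphi(t)t^rV^r)$. This silently discards the per-$v$ Chebotarev error terms, each of size roughly $\sqrt{x}\log\bigl(d_{K_{vt,vt}}x^{[K_{vt,vt}:\Q]}\bigr)\gg\sqrt{x}$. The sum over $v$ is not a short one: the condition $vt\mid\N\p-1$ only forces $vt\le x$, so the range is $V<v\le x/t$, and summing even the bare $\sqrt{x}$ over it already gives $\gg x^{3/2}/t$, which obliterates the claimed error $O(x/\log^2x)$. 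No choice of $V$ as a power of $\log x$ can fix this. The Hooley--Ziegler argument avoids it by splitting into (at least) three ranges: for an intermediate range up to roughly $\sqrt{x}$ divided by a power of $\log x$ one can still afford the GRH errors and keeps only the positive main terms as an upper bound; for a further range one drops to the condition $\N\p\equiv1\bmod vt$ and uses Brun--Titchmarsh; and for the largest range one needs an \emph{unconditional} ingredient with no per-$v$ error at all, namely that $vt\mid\ind_\p(G)$ with $vt$ large forces the order of $(G\bmod\p)$ to be at most $(\N\p-1)/(vt)\le x/(vt)$, and the number of primes for which the order is $\le y$ is $O(y^{1+1/r}\cdot(\text{polylog}))$ by counting prime divisors of the norms of $g^m-1$ for $g$ ranging over generators of $G$ and $m\le y$. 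This last ingredient is entirely absent from your sketch, and it is also where the hypothesis $x\ge t^3$ actually earns its keep (it guarantees the cutoffs are compatible and the order-bound range is controlled), not merely in ensuring $V\ge1$. The $\log\log x$ in the second error term likewise comes out of the careful bookkeeping in these intermediate ranges via $\sum_{v\le y}1/\varphi(v)\ll\log y$, not from the single tail estimate you describe. As written, the proposal does not close.
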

\par The following lemma is a straightforward generalization of \cite[Lem.~6]{moree}, taking into account that 
for every natural number $n$, the ratio
\begin{equation}
\label{Cndef}    
C(n):=\frac{\varphi(n)n^r}{[K_{n,n}:K]}
\end{equation}
is bounded above by some constant $D$, depending only on $K$ and $G$ (see  \cite[Thm.~1.1]{PeruccaSgobba}).
\begin{lem}\label{lem-zieg}
For every real number $y\geq1$ we have
\[  \sum_{t\le y}\,\sum_{n=1}^\infty\frac{\mu(n)}{[K_{nt,nt}:K]}=1+\bo{\frac{D}{y^r}},   \]
where the  implicit constant is absolute.
\end{lem}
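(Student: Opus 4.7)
The plan is to write the left-hand side as a complete double series minus a tail, evaluate the complete series as $1$ by a Möbius identity, and then bound the tail using Landau's classical asymptotic for $1/\varphi$.

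From \eqref{Cndef}, the hypothesis $C(m) \le D$ gives the uniform bound $1/[K_{m,m}:K] \le D/(\varphi(m) m^r)$. Combined with $\varphi(nk) \ge \varphi(n)\varphi(k)$ and $r \ge 1$, this yields absolute convergence:
$$\sum_{n,t \ge 1}\frac{|\mu(n)|}{[K_{nt,nt}:K]} \le D\bigg(\sum_{m \ge 1}\frac{1}{\varphi(m) m^r}\bigg)^{2} < \infty,$$
so we may freely rearrange. Substituting $m = nt$ and using that $\sum_{n \mid m}\mu(n)$ vanishes unless $m=1$, the complete sum evaluates to
$$\sum_{t,n \ge 1}\frac{\mu(n)}{[K_{nt,nt}:K]} = \sum_{m \ge 1}\frac{1}{[K_{m,m}:K]}\sum_{n \mid m}\mu(n) = \frac{1}{[K_{1,1}:K]} = 1,$$
since $K_{1,1} = K$. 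The error in the lemma is therefore $-\sum_{n \ge 1}\mu(n) T_n(y)$, where $T_n(y) := \sum_{k > y}1/[K_{nk,nk}:K]$.

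Bounding in absolute values and applying the same inequalities,
$$\bigg|\sum_{n \ge 1}\mu(n) T_n(y)\bigg| \le \sum_{n \ge 1} T_n(y) \le D\bigg(\sum_{n \ge 1}\frac{1}{\varphi(n)n^r}\bigg)\bigg(\sum_{k > y}\frac{1}{\varphi(k)k^r}\bigg).$$
The first factor in parentheses is an absolute constant for $r \ge 1$. For the second, partial summation against Landau's classical estimate $\sum_{k \le N}1/\varphi(k) = A\log N + B + \bo{\log N/N}$ (with $A,B$ universal) yields $\sum_{k > y}1/(\varphi(k) k^r) = \bo{1/y^r}$ with an absolute implied constant, the key being that the $A\log N$ boundary contribution cancels to leading order against the resulting integral term, leaving a clean $1/y^r$ rate. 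This delivers the claimed bound $\bo{D/y^r}$.

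The main delicate point is precisely this sharp tail estimate on $\sum_{k > y}1/(\varphi(k)k^r)$: replacing Landau's sharp average by the worst-case pointwise bound $1/\varphi(k) \ll \log\log(k)/k$ would cost an extra factor of $\log\log y$, weakening the error term beyond what the lemma states.
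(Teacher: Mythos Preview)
Your proof is correct and follows essentially the same route as the paper's own argument: both establish absolute convergence via $1/[K_{m,m}:K]\le D/(\varphi(m)m^r)$ and $\varphi(nt)\ge\varphi(n)\varphi(t)$, evaluate the full double series as $1$ through the substitution $m=nt$ and the Möbius identity, and bound the tail $\sum_{t>y}\sum_{n\ge 1}$ by the same product of series, invoking Landau's asymptotic \eqref{landausum} plus partial summation to get $\sum_{k>y}1/(\varphi(k)k^r)=O(1/y^r)$. The only difference is the order of presentation.
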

\begin{proof}
We claim that 
$$\sum_{n>y}\frac{1}{n^r\varphi(n)}=O\Big(\frac{1}{y^r}\Big).$$
For $r=1$ this is due to Landau \cite{Landau}, who
first proved that 
\begin{equation}
\label{landausum}    
\sum_{n\le x}\frac{1}{\varphi(n)}=A\log x+B+O\Big(\frac{\log x}x\Big),
\end{equation}
with
$A$ and $B$ explicit constants, and then applied partial integration. The
proof for arbitrary $r$ is completely analogous. Since  
$\varphi(nt)\ge \varphi(n)\varphi(t)$, we obtain
\begin{equation}
\label{sharperestimate}
\sum_{t>y}\sum_{n=1}^\infty\frac{1}{[K_{nt,nt}:K]}
\le D \sum_{t>y}\frac{1}{t^r\varphi(t)}\sum_{n=1}^{\infty}\frac{1}{n^r\varphi(n)}\ll \sum_{t>y}\frac{D}{t^r\varphi(t)}\ll \frac{D}{y^r},
\end{equation}
where we used that the fourth sum is bounded above by a constant not depending on $r$.
The estimate \eqref{sharperestimate} shows that the double sum in the
statement of Lemma 
\ref{lem-zieg} is absolutely convergent for all $y$.
Thus, we may rearrange the double sum as follows:
\begin{align*}
 \sum_{t=1}^\infty\sum_{n=1}^\infty\frac{\mu(n)}{[K_{nt,nt}:K]} & =  \sum_{m=1}^\infty\sum_{s|m}\frac{\mu(m/s)}{[K_{m,m}:K]}
  =  \sum_{m=1}^\infty\sum_{d|m}\frac{\mu(d)}{[K_{m,m}:K]} \\
  & = \sum_{m=1}^\infty\frac{1}{[K_{m,m}:K]}\sum_{d|m}\mu(d)  =  \frac{1}{[K_{1,1}:K]}=1,
\end{align*}
completing the proof.
\end{proof}

The following is a generalization of Ziegler \cite[Lem.~13]{zieg}.
\begin{lem}\label{lem13} 
{\rm(under GRH)}. We have
\[
\av{\left\{\p:\N\p\leq x,\,\ind_\p(G)>(\log x)^{\frac{1}{r+1}}\right\}}=\bo{\frac{x}{\log^{2-\frac{1}{r+1}}x}},
\]
where the primes $\p$ of $K$ are restricted to
those for which $\ind_\p(G)$ is well-defined. The 
 implicit constant in the $O$-term depends only on $K$ and $G$.
\end{lem}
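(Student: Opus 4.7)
The strategy is to count the complement. Set $y:=(\log x)^{1/(r+1)}$; for $x$ sufficiently large we have $y\le x^{1/3}$, so Theorem~\ref{thm-Rt} applies to every $t\le y$. I would specialise it to the trivial choice $F=K$, $C=\{\id\}$, for which $c(n)\equiv 1$ and $R_t(x)$ reduces to the number of primes $\p$ of $K$ with $\N\p\le x$ and $\ind_\p(G)=t$. The sought-for cardinality then equals
\[
 \av{\{\p:\N\p\le x,\,\ind_\p(G)\text{ well-defined}\}} - \sum_{t\le y}R_t(x),
\]
and by the prime ideal theorem \eqref{PIT}, modulo the $O(1)$ exceptional primes at which the index is not defined, the first term is $\Li(x)+\bo{xe^{-c_{K}\sqrt{\log x}}}$.

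Next, I would expand $R_t(x)$ via Theorem~\ref{thm-Rt} and sum over $t\le y$. Lemma~\ref{lem-zieg} supplies the main term:
\[
 \Li(x)\sum_{t\le y}\sum_{v=1}^\infty\frac{\mu(v)}{[K_{vt,vt}:K]} = \Li(x)\bigl(1+\bo{y^{-r}}\bigr),
\]
and since $\Li(x)/y^r\ll x/\log^{2-1/(r+1)} x$, this cancels the $\Li(x)$ coming from \eqref{PIT} up to an admissible error. The uniform error $x/\log^2 x$ of Theorem~\ref{thm-Rt}, summed over the $O(y)$ values of $t$, contributes $y\cdot x/\log^2 x = x/\log^{2-1/(r+1)} x$, matching the target exactly. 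Finally, the $\varphi(t)$-weighted error collapses, via Landau's estimate \eqref{landausum}, to $\bo{x(\log\log x)^2/\log^2 x}$, which is negligible in comparison.

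The calculations are routine and I do not anticipate any serious obstacle: every ingredient is already in place in Theorem~\ref{thm-Rt} and Lemma~\ref{lem-zieg}. The one genuinely delicate point is the balancing of parameters, which is what fixes the exponent $1/(r+1)$: it is forced by equating the two dominant error sources $\Li(x)/y^r$ and $y\cdot x/\log^2 x$, while simultaneously keeping $y$ comfortably below $x^{1/3}$ so that Theorem~\ref{thm-Rt} remains applicable throughout the range. Once this triple balance is verified, the lemma follows by subtraction.
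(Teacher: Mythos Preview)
Your proposal is correct and follows essentially the same route as the paper: specialize Theorem~\ref{thm-Rt} to $F=K$, subtract $\sum_{t\le y}R_t(x)$ from the prime ideal count, invoke Lemma~\ref{lem-zieg} for the main term and \eqref{landausum} for the $\varphi(t)$-weighted error, and balance at $y=(\log x)^{1/(r+1)}$. The only cosmetic difference is that the paper records an extra $O(\sqrt{x}/\log x)$ for primes of residue degree $\ge 2$, which is harmless and absorbed by your target error anyway.
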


\begin{proof} The number of primes with ramification index or residue class degree at least $2$ 
is of order $O(\sum_{p\le \sqrt{x}}1)=\bo{\sqrt{x}/\log x}$. We make use of the functions $R_t(x)$ from Theorem \ref{thm-Rt} with $F=K$. 
For any real number $y\geq1$, let $\mc E_y(x)$ be the number of primes $\p$ with $\N\p\leq x$ and such that $\ind_\p(G)>y$.
Notice that
\[ \mc E_y(x)=\av{\left\{\p:\N\p\leq x\right\}}-\sum_{t\le y}  R_t(x)+\bo{\frac{\sqrt{x}}{\log x}}\,. \]
Landau's prime ideal theorem \eqref{PIT} implies the (much) weaker estimate
\begin{equation}
\label{pit}    
\av{\left\{\p:\N\p\leq x\right\}}=\Li(x)+\bo{\frac{x}{\log^2x}}=\frac{x}{\log x}+\bo{\frac{x}{\log^2x}},
\end{equation}
which is all we need.
By Theorem \ref{thm-Rt} and Lemma \ref{lem-zieg} we obtain
\begin{align*}
\sum_{t\le y} R_t(x) &= \Li(x)\sum_{t\le y}\sum_{n=1}^\infty\frac{\mu(n)}{[K_{nt,nt}:K]} + \bo{\frac{xy}{\log^2x}}\\
& + \bo{\frac{x\log\log x}{\log^2x}\sum_{t\le y}\frac{1}{\varphi(t)}}\\
 & =   \Li(x) +\bo{\frac{x}{y^r\log x}}  + \bo{\frac{xy}{\log^2x}}\\ 
 & + \bo{\frac{x\log\log x}{\log^2x}\sum_{t\le y}\frac{1}{\varphi(t)}}\,.
\end{align*}
On taking $y=(\log x)^{1/(r+1)}$, we now obtain
on invoking \eqref{pit} and \eqref{landausum}, the estimate
$$\mc E_y(x)=\bo{\frac{x}{y^r\log x}}+\bo{\frac{xy}{\log^2x}} + \bo{\frac{x(\log\log x)^2}{\log^2x}}=\bo{\frac{x}{\log^{2-\frac{1}{r+1}}x}},$$
completing the proof.
\end{proof}

\begin{proof}[Proof of Theorem \ref{thm-index}]
Set $\rho=2-\frac{1}{r+1}$.
Lemma \ref{lem13} with $y=(\log x)^{\frac{1}{r+1}}$ yields
$$
 P(x) = \sum_{\substack{t\leq y \\ t\in S}}  R_t(x)+\bo{\frac{x}{\log^{\rho}x}}.
$$ 
Estimating the sum as in the proof of Lemma \ref{lem13}, we obtain
\[
P(x)=\Li(x)\sum_{\substack{t\leq y \\ t\in S}}\sum_{v=1 }^\infty\frac{\mu(v)c(vt)}{[F_{vt,vt}:K]}+\bo{\frac{x}
{\log^{\rho}x}}\,.
\]
Now we focus on the main term. We have 
\[ \bigg\rvert\sum_{t\in S}\sum_{v=1 }^\infty\frac{\mu(v)c(vt)}{[F_{vt,vt}:K]}-
	\sum_{\substack{t\leq y \\ t\in S}}\sum_{v=1 }^\infty\frac{\mu(v)c(vt)}{[F_{vt,vt}:K]} \bigg\rvert \leq \sum_{t> y }\sum_{v=1 }^\infty\frac{1}{[F_{vt,vt}:F]}  \]
By \eqref{sharperestimate} the right-hand side is bounded by $\ll y^{-r}$. 
Using this estimate the proof is easily completed.
\end{proof}

\section{The distribution of the index over residue classes}
\label{sec:APdensity}
Let $a,d$ be integers with $d\geq2$.
 We study the density $\dens_G(a,d)$ of primes $\p$ of $K$ such that $\ind_\p(G)\equiv a\bmod d$. 
Under GRH, by Theorem \ref{thm-index} this density exists and we have 
\begin{equation}
\label{densdoublesum}
 \dens_G(a,d)=\sum_{t\equiv a\bmod d}\ \sum_{v\geq1}\frac{\mu(v)}{[K_{vt,vt}:K]}\,. 
\end{equation}
The goal of this section is 
to prove Theorem \ref{densityaslinearcombination}, 
which expresses $\dens_G(a,d)$ as a finite sum of terms depending on Dirichlet characters $\chi$ 
of modulus $d$. These terms involve Artin-type constants $B_{\chi}(r)$ that can be evaluated with multi-precision using
Theorem \ref{numeriek}, thus allowing
one to evaluate $\dens_G(a,d)$ with multi-precision.

We start by explaining our notation.
Given an integer $n\geq1$ we let $G_n$ be the group of characters defined on $(\Z/n\Z)^\times$, so that $G_n\cong(\Z/n\Z)^\times$. For a Dirichlet character $\chi$ we denote by $h_\chi$ the (Dirichlet) convolution $\mu*\chi$ of the M\"obius function $\mu$ with $\chi$, that is $\mu*\chi(n)=\sum_{d\mid n}\mu(d)\chi(n/d)$. Recall that the Dirichlet convolution of two multiplicative functions is again a multiplicative function.

Put $w=\gcd(a,d), a'=a/w$ and $d'=d/w$. 
The integers $t\equiv a\bmod d$ are of the form
$wt'$ with  $t'\equiv a'\bmod d'$. Thus we can rewrite 
$\dens_G(a,d)$ as 
$$\dens_G(a,d)=\sum_{t\equiv a'\bmod d'}\ \sum_{v\geq1}\frac{\mu(v)}{[K_{vwt,vwt}:K]}\,.$$
This expression on its turn can be rewritten as
\begin{align}
\label{hchi}
\dens_G(a,d) & =\sum_{t\equiv a'\bmod d'}\ \sum_{\substack{v_1\geq1\\ t\mid v_1}} \frac{\mu(v_1/t)}{[K_{v_1w,v_1w}:K]}\nonumber	\\
&=\sum_{v_1\geq1}\ \sum_{\substack{t\equiv a'\bmod d'\\ t\mid v_1}} \frac{\mu(v_1/t)}{[K_{v_1w,v_1w}:K]}\nonumber	\\
&=\sum_{v_1\geq1}\left(\frac{1}{\varphi(d')}\sum_{\chi\in G_{d'}} \overline{\chi(a')}h_\chi(v_1) \right)\frac{1}{[K_{v_1w,v_1w}:K]}\nonumber \\
&=\frac{1}{\varphi(d')}\sum_{\chi\in G_{d'}} \overline{\chi(a')}\sum_{v_1\geq1}\frac{h_\chi(v_1)}{[K_{v_1w,v_1w}:K]}\,.
\end{align}
In the second step we used that the double series is absolutely convergent (see the proof of Lemma 
\ref{lem-zieg}). 
In the third step we used \cite[Lem.~9]{moree}, where $\chi$ runs over the Dirichlet characters modulo $d'$.

We now focus on the final sum in \eqref{hchi}. Recall the 
definition \eqref{Cndef} of $C(n)$. By Perucca et al.\,\cite[Thm.~1.1]{PST4}
there exists an integer $n_0$ (depending only on $G$ and $K$) such that
\begin{equation}
\label{n0} 
C(n)=C(\gcd(n,n_0))\,.
\end{equation}
One can easily show that for $m\mid n$, one has $C(m)\mid C(n)$, and hence $n_0$ can be taken to be the minimal integer satisfying
$$C(n_0)=\max_{n\geq 1} \frac{\varphi(n)n^r}{[K_{n,n}:K]}\,.$$
By \eqref{n0} we have
\[ \frac{1}{[K_{n,n}:K]}=\frac{C(\gcd(n,n_0))}{\varphi(n)n^r}\,, \]
and therefore
\[ \sum_{n\geq1}\frac{1}{[K_{n,n}:K]}=\sum_{g\mid n_0}\sum_{\substack{n\geq1\\ (n,n_0)=g}}\frac{C(g)}{\varphi(n)n^r}\,. \]
In our case,
\begin{equation}
\label{vw}    
\sum_{v\geq1}\frac{h_\chi(v)}{[K_{vw,vw}:K]}
	=\sum_{g\mid n_0}\sum_{\substack{v\geq1\\ (vw,n_0)=g}}\frac{C(g)h_\chi(v)}{\varphi(vw)v^rw^r}\,. 
\end{equation}	
If $\sum_{v\ge 1}f(v)$ is some absolute convergent series, we have
\begin{align*}
\sum_{\substack{v\geq1\\ (vw,n_0)=g}}f(v)&=\sum_{\substack{v\geq1\\ (\frac{vw}g,\frac{n_0}g)=1}}f(v)=\sum_{v\ge 1}f(v)\sum_{n\mid \frac{n_0}g,\,n\mid \frac{vw}g}\mu(n)\\
&=\sum_{n\mid \frac{n_0}{g}}\mu(n)\sum_{\substack{v\ge 1\\ n\mid \frac{vw}g}}f(v)=\sum_{n\mid \frac{n_0}{g}}\mu(n)\sum_{\substack{v\geq1\\ \frac{gn}{(gn,w)}\mid v}}f(v),
\end{align*}
where we used that $n$ divides the integer $vw/g$ if and only if 
$gn/(gn,w)$ divides $v$. Thus, in particular,
$$
\sum_{\substack{v\geq1\\ (vw,n_0)=g}}\frac{C(g)h_\chi(v)}{\varphi(vw)v^rw^r}=\frac{C(g)}{w^r}\sum_{n\mid \frac{n_0}{g}}\mu(n)\sum_{\substack{v\geq1\\ \frac{gn}{(gn,w)}\mid v}}\frac{h_\chi(v)}{\varphi(vw)v^r}\,.$$
Inserting the right-hand side into \eqref{vw} and inserting the resulting expression into \eqref{hchi} yields
\[  \dens_G(a,d)=\frac{1}{\varphi(d')}\sum_{\chi\in G_{d'}} \overline{\chi(a')}\sum_{g\mid n_0}\frac{C(g)}{w^r}\sum_{n\mid \frac{n_0}{g}}\mu(n)\sum_{\substack{v\geq1\\ \frac{gn}{(gn,w)}\mid v}}\frac{h_\chi(v)}{\varphi(vw)v^r} \,.   \]
Denoting
$$C_{\chi}(N,w,r)=\sum_{v\ge 1\atop N\mid v}
\frac{h_{\chi}(v)}{\varphi(vw)v^r},$$
we can write this as 
\begin{equation}
    \label{wanteddensity}
\dens_G(a,d)=\frac{1}{\varphi(d')}\sum_{\chi\in G_{d'}} \overline{\chi(a')}\sum_{g\mid n_0}\frac{C(g)}{w^r}\sum_{n\mid \frac{n_0}{g}}\mu(n)\,C_{\chi}\Big(\frac{gn}{(gn,w)},w,r\Big) \,.   
\end{equation}

Let $\kappa(n)=\prod_{p\mid n}p$ denote the squarefree kernel of $n.$
Recall that $h_{\chi}=\mu* \chi$. The following result is a special case
of \cite[Lem.~10]{moree} and expresses $C_{\chi}(N,w,r)$ as an Euler product.
\begin{lem}
\label{productevaluation}
We have
$$
C_{\chi}(N,w,r)=c_{\chi}(N,w,r)B_{\chi}(r),
$$
where  
$$c_{\chi}(N,w,r)=\frac{h_{\chi}(N)\kappa(Nw)}{N^{r+1}w}\prod_{p\mid N}\frac{p^{r+1}}{p^{r+2}-p^{r+1}-p+\chi(p)}
\prod_{p\nmid N\atop p\mid w}\frac{p^{r+1}-1}{p^{r+2}-p^{r+1}-p+\chi(p)},$$
and
\begin{equation}
    \label{Bchidef}
  B_{\chi}(r)=\prod_p\Big(1+\frac{p(\chi(p)-1)}{(p-1)(p^{r+1}-\chi(p))}\Big)\,,
\end{equation}
where $p$ runs over all rational prime numbers.
\end{lem}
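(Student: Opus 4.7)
The plan is to compute $C_{\chi}(N,w,r)$ directly as an Euler product, exploiting the fact that $h_{\chi}=\mu*\chi$ is multiplicative with $h_{\chi}(p^c)=\chi(p)^{c-1}(\chi(p)-1)$ for $c\ge 1$. The main obstacle to a naive factorisation is the totient $\varphi(vw)$, whose argument couples $v$ and $w$. The first step is therefore to decouple them using
$$\varphi(vw)=v\,\varphi(w)\prod_{p\mid v,\,p\nmid w}\left(1-\tfrac{1}{p}\right),$$
which yields
$$C_{\chi}(N,w,r)=\frac{1}{\varphi(w)}\sum_{\substack{v\ge 1\\ N\mid v}}\frac{h_{\chi}(v)}{v^{r+1}}\prod_{p\mid v,\,p\nmid w}\frac{p}{p-1}.$$
The summand is now multiplicative in $v$ and the divisibility constraint $N\mid v$ splits prime-by-prime, so the series factorises as an absolutely convergent Euler product $\prod_p E_p$.

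Next I would compute each local factor $E_p$ in closed form. Setting $a=v_p(N)$, a geometric-series calculation using the formula for $h_{\chi}(p^c)$ yields
$$\sum_{c\ge \max(a,1)}\frac{h_{\chi}(p^c)}{p^{(r+1)c}}=\frac{h_{\chi}(p^{a})\,p^{r+1}}{p^{(r+1)a}\,(p^{r+1}-\chi(p))},$$
with the $c=0$ term included when $a=0$. Going case by case according to whether $p\mid N$ and whether $p\mid w$ (the weight $p/(p-1)$ being present only when $p\nmid w$ and $c\ge 1$), each $E_p$ naturally splits as
$$E_p=L_p(a,v_p(w))\cdot\frac{p^{r+2}-p^{r+1}-p+\chi(p)}{(p-1)(p^{r+1}-\chi(p))},$$
the second factor being exactly the local factor of $B_{\chi}(r)$, and $L_p=1$ when $p\nmid Nw$.

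Finally, I would collect the local corrections together with the prefactor $1/\varphi(w)$, rewriting $\varphi(w)^{-1}=w^{-1}\prod_{p\mid w}p/(p-1)$ so that every contribution is localised at a single prime. The key combinatorial identity is $\kappa(Nw)/w=(\kappa(w)/w)\prod_{p\mid N,\,p\nmid w}p$, which shows how the factor $\kappa(Nw)/(N^{r+1}w)$ in $c_{\chi}(N,w,r)$ distributes across primes. Checking prime-by-prime that the local corrections $L_p$ together with $1/\varphi(w)$ assemble into the closed form of $c_{\chi}(N,w,r)$ concludes the argument.

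The chief difficulty is entirely combinatorial bookkeeping: keeping the four prime-cases straight and ensuring the $p$-powers arising from $\kappa(Nw)/w$, from the prefactor $1/\varphi(w)$, and from the weights $p/(p-1)$ inside the sum all balance. Absolute convergence of both the series and of $B_{\chi}(r)$ is immediate from the bound $E_p=1+O(p^{-r-1})$ valid for $r\ge 1$, so no analytic delicacy is involved.
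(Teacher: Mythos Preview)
Your approach is correct and, like the paper's, is an Euler-product computation; the organisation differs. The paper first disposes of the case $h_\chi(N)=0$ separately (observing that then some prime $p\mid N$ has $\chi(p)=1$, so every term of the defining series vanishes), and in the remaining case substitutes $v=Nv'$ and factors out $h_\chi(N)/(\varphi(Nw)N^r)$, leaving an unconstrained series $\sum_{v'\ge 1}$ whose summand $h_\chi(Nv')\varphi(Nw)/\bigl(h_\chi(N)\varphi(Nv'w)(v')^{r}\bigr)$ is multiplicative in $v'$; the Euler product is then read off directly. You instead keep the divisibility constraint $N\mid v$ and localise it as $v_p(v)\ge v_p(N)$ in each Euler factor, after first rewriting $1/\varphi(vw)$ via $\varphi(vw)=v\,\varphi(w)\prod_{p\mid v,\,p\nmid w}(1-1/p)$ so that the summand becomes multiplicative in $v$. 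Your route has the minor advantage of absorbing the vanishing case $h_\chi(N)=0$ automatically into a zero local factor, at the cost of the four-way case analysis ($p\mid N$ or not, $p\mid w$ or not) and the bookkeeping you describe; the paper's substitution $v\mapsto Nv'$ yields multiplicativity with less preliminary manipulation but forces the case split because one divides by $h_\chi(N)$.
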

\begin{cor}
We have $C_{\chi}(1,1,r)=\sum_{v\geq1}\frac{h_\chi(v)}{v\varphi(v)}=B_{\chi}(r)$.
\end{cor}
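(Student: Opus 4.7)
The plan is to obtain the identity as a direct specialisation of Lemma~\ref{productevaluation} at $N=w=1$, since essentially all the work is already carried out there. First I would unpack the definition
\[
C_{\chi}(N,w,r)=\sum_{v\ge 1,\,N\mid v}\frac{h_{\chi}(v)}{\varphi(vw)v^{r}}.
\]
With $N=1$ the divisibility constraint becomes vacuous, and with $w=1$ we have $\varphi(vw)=\varphi(v)$, so the sum collapses to $\sum_{v\ge 1}h_{\chi}(v)/(\varphi(v)v^{r})$, which is the middle quantity in the statement (read with the $r$-th power in the denominator, as is forced by the $r$-dependence on the right).

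Next I would invoke Lemma~\ref{productevaluation} at $N=w=1$. Inspecting the expression for $c_{\chi}(N,w,r)$ one sees that $h_{\chi}(1)=\mu*\chi(1)=1$, that $\kappa(Nw)=\kappa(1)=1$, and that the prefactor $N^{r+1}w$ equals $1$; moreover both Euler-like products $\prod_{p\mid N}$ and $\prod_{p\nmid N,\, p\mid w}$ are empty. Hence $c_{\chi}(1,1,r)=1$, and the lemma gives $C_{\chi}(1,1,r)=B_{\chi}(r)$ immediately.

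There is no real obstacle: the corollary is just the cleanest special case of the lemma, but it is still worth recording because it realises $B_{\chi}(r)$ itself as the value of an explicit Dirichlet series, not merely as an Euler product. As an independent sanity check, one could bypass the lemma by expanding $\sum_{v\ge 1}h_{\chi}(v)/(\varphi(v)v^{r})$ into an Euler product using the multiplicativity of $h_{\chi}$ and of $1/(\varphi(v)v^{r})$; writing $h_{\chi}(p^{k})=\chi(p)^{k}-\chi(p)^{k-1}$ and $\varphi(p^{k})=p^{k-1}(p-1)$ for $k\ge 1$, and summing the resulting geometric series in $\chi(p)/p^{r+1}$, produces the local factor $1+p(\chi(p)-1)/[(p-1)(p^{r+1}-\chi(p))]$, matching \eqref{Bchidef} prime by prime.
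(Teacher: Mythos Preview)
Your proposal is correct and matches the paper's treatment: the corollary is stated without a separate proof, being an immediate specialisation of Lemma~\ref{productevaluation} at $N=w=1$, exactly as you unpack it. Your observation that the middle sum should read $\sum_{v\ge 1}h_{\chi}(v)/(\varphi(v)v^{r})$ is also apt, and the optional direct Euler-product check you sketch is a sound independent verification.
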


\begin{proof}[Proof of Lemma \ref{productevaluation}]We distinguish two cases:\\
\noindent a) The case where $h_{\chi}(N)=0.$\\
\indent We have to verify that $C_{\chi}(N,w,r)=0.$ Since $h_\chi$ is multiplicative and we have $h_{\chi}(p^k)=\chi(p)^{k-1}(\chi(p)-1),$ it follows that if $h_{\chi}(N)=0,$ then
there is a prime divisor $p$ of $N$ with $\chi(p)=1.$ Hence, $h_{\chi}(v)=0$ for
all $v$ that are divisible by $N$ and so $C_{\chi}(N,w,r)=0.$\\
b) The case where $h_{\chi}(N)\ne 0.$\\
\indent We rewrite $C_{\chi}(N,w,r)$ as 
\begin{equation}
    \label{mademultiplicative}
C_{\chi}(N,w,r)=\frac{h_{\chi}(N)}{\varphi(Nw)N^r}
\sum_{v\ge 1}\frac{h_{\chi}(Nv)\varphi(Nw)}{h_{\chi}(N)\varphi(Nvw)v^r},
\end{equation}
and note that the argument is a multiplicative function in $v.$
We apply the Euler product identity to evaluate the sum and obtain
$$\prod_{p\mid N}\frac{p^{r+1}}{p^{r+1}-\chi(p)}
\prod_{p\nmid N\atop p\mid w}\Big(1+\frac{(\chi(p)-1)}{p^{r+1}-\chi(p)}\Big)
\prod_{p\nmid Nw}\Big(1+\frac{p(\chi(p)-1)}{(p-1)(p^{r+1}-\chi(p))}\Big),$$
which can be rewritten as
$$B_{\chi}(r)\prod_{p\mid N}\frac{p^{r+1}(p-1)}{p^{r+2}-p^{r+1}-p+\chi(p)}
\prod_{p\nmid N\atop p\mid w}\frac{(p-1)(p^{r+1}-1)}{p^{r+2}-p^{r+1}-p+\chi(p)}\,.$$
On inserting this in \eqref{mademultiplicative} and noting that
$$\varphi(\kappa(Nw))=\prod_{p\mid N}(p-1)\prod_{p\nmid N\atop p\mid w}(p-1),\quad\frac{\varphi(\kappa(Nw))}{\varphi(Nw)}=\frac{\kappa(Nw)}{Nw},$$ the proof 
is completed.
\end{proof}

\par The density $\dens_G(a,d)$ can be expressed as a finite linear combination
involving the constants $B_{\chi}(r)$. 
Our result generalizes \cite[Thm. 5]{moree} by Moree, who
dealt with the case $F=K=\mathbb Q$ and $G$ of rank $1$.
\begin{thm} {\rm(under GRH)}.
\label{densityaslinearcombination}
Let $a$ and $d$ be two natural numbers. Put $d'=d/(a,d)$.
Assuming that the function $C(n)$, defined in \eqref{Cndef}, is explicitly given, 
we can write
$$\dens_G(a,d)=\sum_{\chi\in G_{d'}}d_{\chi}B_{\chi}(r),$$
with the $d_{\chi}$'s explicit complex numbers (they can be determined using \eqref{wanteddensity} and  Lemma  \ref{productevaluation}).
\end{thm}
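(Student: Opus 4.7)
The plan is to substitute the factorisation provided by Lemma \ref{productevaluation} into the closed form \eqref{wanteddensity} and then read off the coefficient of each $B_\chi(r)$. The key observation is that in $C_\chi(N,w,r)=c_\chi(N,w,r)\,B_\chi(r)$ the factor $B_\chi(r)$ is \emph{independent} of $N$, so it can be pulled out of both inner sums in \eqref{wanteddensity}. This immediately gives
$$\dens_G(a,d) = \sum_{\chi\in G_{d'}} d_\chi\, B_\chi(r), \qquad d_\chi := \frac{\overline{\chi(a')}}{\varphi(d')}\sum_{g\mid n_0}\frac{C(g)}{w^r}\sum_{n\mid n_0/g}\mu(n)\, c_\chi\!\left(\tfrac{gn}{(gn,w)},w,r\right).$$

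Next I would verify that each $d_\chi$ is genuinely an explicit complex number. All three summations involved are finite: the outer one has $\varphi(d')$ terms (one per character of $(\Z/d'\Z)^\times$), the middle one ranges over divisors of $n_0$, and the inner one over divisors of $n_0/g$. The quantities $w=(a,d)$, $a'=a/w$ and $d'=d/w$ are read off from the data, and the hypothesis that $C(n)$ is explicitly known provides both $n_0$ and all the values $C(g)$ for $g\mid n_0$. Finally, Lemma \ref{productevaluation} expresses every $c_\chi(N,w,r)$ in closed form as a finite product of elementary factors over the primes dividing $Nw$, requiring only the values $\chi(p)$ at those primes. Hence $d_\chi$ is effectively computable.

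The only technical point requiring care is the legitimacy of the interchange of summations that produced \eqref{wanteddensity} from \eqref{densdoublesum}, and of factoring $B_\chi(r)$ out of the two finite inner sums here. Both are justified by the absolute convergence of the relevant series, which has already been established via the estimate \eqref{sharperestimate} used in the proof of Lemma \ref{lem-zieg}. With that in hand, the argument is essentially substitution and bookkeeping, so I do not anticipate a substantive obstacle; the main conceptual content of the theorem is the factorisation $C_\chi=c_\chi B_\chi$ already isolated in Lemma \ref{productevaluation}.
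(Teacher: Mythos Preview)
Your proposal is correct and follows exactly the paper's own argument: substitute $C_\chi=c_\chi B_\chi$ from Lemma~\ref{productevaluation} into \eqref{wanteddensity}, pull out $B_\chi(r)$, and read off the same explicit formula for $d_\chi$. You have simply added more commentary on why the $d_\chi$ are effectively computable and on absolute convergence, which the paper leaves implicit.
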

The equality of the series given in \eqref{densdoublesum} and the linear combination of Theorem \ref{densityaslinearcombination} is unconditional: it is establishing that these two quantities are densities that requires assuming GRH.

\begin{proof}
In the identity \eqref{wanteddensity} for $\dens_G(a,d)$ we make the substitution
\[ C_\chi\bigg( \frac{gn}{(gn,w)},w,r \bigg)=c_\chi\bigg( \frac{gn}{(gn,w)},w,r \bigg)B_\chi(r) \, \]
(which is allowed by  Lemma  \ref{productevaluation}). The constants $d_\chi$ are obtained by factoring out the terms $B_\chi(r)$, so that for each $\chi\in G_{d'}$ we have 
\[ d_\chi= \frac{\overline{\chi(a')}}{\varphi(d')}\sum_{g\mid n_0}\frac{C(g)}{w^r}\sum_{n\mid \frac{n_0}{g}}\mu(n)c_{\chi}\Big(\frac{gn}{(gn,w)},w,r\Big) \,. \qedhere  
\]
\end{proof}
\subsection{Generic aspects of the behaviour of $\dens_G(a,d)$}
Generically the degree $[K_{vt,vt}:K]$ equals $vt\varphi(vt)$ if $G$ has rank $1$. If every degree occurring
in \eqref{densdoublesum}  satisfied this, then we would obtain
\[  
 \rho(a,d):=\sum_{t\equiv a\bmod d}\ \sum_{v\geq1}\frac{\mu(v)}{vt\varphi(vt)}\,.
\]
The inner sum is easily seen to equal $A\cdot r(t)$, with
$$r(t)=\frac{1}{t^2}\prod_{p\mid t}\frac{p^2-1}{p^2-p-1}\,.$$
Thus we can alternatively write
$$\rho(a,d)=A_1\sum_{t\equiv a\bmod d}r(t)\,,$$ 
with
\begin{equation}
\label{constantAr}    
A_r:=\prod_p\Big(1-\frac{1}{p^r(p-1)}\Big)
\end{equation}
the \emph{rank $r$ Artin constant}. The ``incomplete" rank $r$ Artin constant, defined by restricting to $p$ odd, appears also in other works, such as in Pappalardi \cite{pappa-rank}.
For every $B>0$ we have, see \cite[Thm.~4]{moreeFF},
$$\sum_{p\le x}\rho(p;a,d)=\rho(a,d)\Li(x)+O\Big(\frac{x}{\log^B x}\Big),$$
with $\rho(p;a,d)$ the density of elements of $\mathbb F_p^\times$ having
index congruent to $ a\bmod d$. Thus on average a finite field of prime order has
$\rho(a,d)$ elements having index congruent to $ a\bmod d$. 
Two cases are particularly easy.
\begin{prop}[{\cite[Prop.~4]{moreeFF}}]
\label{null}
One has 
\[ \rho(0,d)=\frac{1}{d\varphi(d)} \quad \text{ and } \quad
\rho(d,2d)=
\begin{cases}
\rho(0,2d) & \text{ if } $d$ \text{ is odd}; \\
3\rho(0,2d) & \text{ if } $d$ \text{ is even}.
\end{cases} \]
\end{prop}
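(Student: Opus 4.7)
The plan is to argue directly from the defining series
\[
\rho(a,d) = \sum_{t \equiv a \bmod d} \sum_{v \geq 1} \frac{\mu(v)}{vt\,\varphi(vt)},
\]
whose absolute convergence (a special case of \eqref{sharperestimate} with $r=1$) permits all the rearrangements I need. In both identities the key combinatorial input is the M\"obius identity $\sum_{v \mid m} \mu(v) = [m=1]$.

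For the first identity I would parametrise $t = dk$ with $k \geq 1$, substitute $m = vk$, and use $\varphi(vdk) = \varphi(dm)$ to obtain
\[
\rho(0,d) = \frac{1}{d} \sum_{m\geq 1} \frac{1}{m\,\varphi(dm)} \sum_{v \mid m} \mu(v).
\]
Only the $m = 1$ term survives, yielding $\rho(0,d) = 1/(d\varphi(d))$.

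For the second identity the outer sum runs over $t = dj$ with $j$ odd. The analogous substitution $m = vj$ gives
\[
\rho(d, 2d) = \frac{1}{d} \sum_{m\geq 1} \frac{1}{m\,\varphi(dm)} \sum_{\substack{v \mid m \\ m/v\ \text{odd}}} \mu(v).
\]
I would evaluate the restricted inner sum by writing $m = 2^a m_0$ with $m_0$ odd and noting that the constraint ``$m/v$ odd'' forces $2^a \mid v$; setting $v = 2^a v'$ with $v' \mid m_0$ factors the sum as $\mu(2^a)\sum_{v' \mid m_0} \mu(v')$. This vanishes unless $m_0 = 1$ and $a \in \{0,1\}$, i.e.\ $m \in \{1,2\}$, leaving
\[
\rho(d, 2d) = \frac{1}{d\varphi(d)} - \frac{1}{2d\varphi(2d)}.
\]
Comparing with $\rho(0, 2d) = 1/(2d\varphi(2d))$ from the first identity, I would then split on the parity of $d$: when $d$ is odd one has $\varphi(2d)=\varphi(d)$ and the two terms combine into $\rho(0,2d)$; when $d$ is even one has $\varphi(2d) = 2\varphi(d)$ and they combine into $3\rho(0,2d)$.

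The whole argument is essentially a direct calculation; the only non-routine ingredient is the $2$-adic case analysis of the restricted M\"obius sum, which is the main (though mild) obstacle.
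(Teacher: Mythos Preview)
Your argument is correct in every detail: the reindexing $m=vk$ (resp.\ $m=vj$) collapses the double sum via $\sum_{v\mid m}\mu(v)=[m=1]$, and your $2$-adic analysis of the restricted M\"obius sum is accurate, yielding exactly the two surviving terms $m\in\{1,2\}$ and the stated parity dichotomy. Note that the present paper does not actually supply a proof of this proposition---it is quoted from \cite[Proposition~4]{moreeFF}---so there is no in-paper argument to compare against; your direct computation is a clean self-contained verification.
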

In the remaining cases it is not difficult to express
$\rho(a,d)$ in terms of the $B_{\chi}(1)$'s, see \cite[Prop.\,6]{moreeFF}. When $(a,d)=1$, this
expression takes a particularly simple form, namely
\begin{equation}
    \label{rhoB}
    \rho(a,d)=\frac{1}{\varphi(d)}\sum_{\chi \bmod d}\overline{\chi(a)}B_{\chi}(1)\,.
\end{equation}
In the examples in Section \ref{sec:examples} we will meet $\rho(a,d)$ again.

\section{The positivity of $\dens_G(a,d)$}
\label{sec:vanishing}
 As in the previous section we consider a number field $K$, a finitely generated and torsion-free subgroup $G$ of $K^\times$, and the natural density $\dens_G(a,d)$ of the primes $\p$ of $K$ such that $\ind_\p(G)\equiv a \bmod d$. We are interested in characterizing when this density is positive. 
 
\begin{exa}
Recall that for a prime $\p$ of $K$ of degree $1$ such that $\ind_\p(G)$ is well-defined, we have $d\mid \ind_\p(G)$ if and only if $\p$ splits completely in $K_{d,d}$ (cf.\ \cite[Lem.~2]{zieg}). So by Chebotarev's density theorem we have (without relying on GRH)
\begin{equation}
\label{simpledegree}    
\dens_G(0,d)=\frac{1}{[K_{d,d}:K]}>0. 
\end{equation}
\end{exa}
In view of the above example, we may suppose in the following that $0<a<d$.

We denote by 
$\dens_G(h)$ the density of primes $\p$ such that $\ind_\p(G)=h$, with $h$ a prescribed 
integer, and by $n_0$ an integer satisfying $C(n)=C(\gcd(n,n_0))$ for all $n\geq1$, where $C(n)$ was defined in \eqref{Cndef}. With this notation we are ready to recall the following result by Järviniemi and Perucca:
\begin{thm}[{\cite[Main Thm.\ and Rem.\,4.2]{JP}}, {\rm under GRH}]\label{thm-positive}
The density $\dens_G(h)$ is well-defined for all $h\geq 1$, and we have $\dens_G(h)>0$ if and only if $\dens_G(\gcd(h,n_0))>0$. For any set $S$ of positive integers the following holds: if the density of primes $\p$ of $K$ such that $\ind_\p(G)\in S$ is positive, then there is some $h\in S$ such that $\dens_G(h)>0$.
\end{thm}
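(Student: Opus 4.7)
The well-definedness of $\dens_G(h)$, together with the formula
$$\dens_G(h)=\sum_{v\geq 1}\frac{\mu(v)}{[K_{vh,vh}:K]},$$
is a direct specialisation of Theorem~\ref{thm-index} to $S=\{h\}$, $F=K$ and $C=\{\id\}$; the absolute convergence of this series has already been verified while proving Lemma~\ref{lem-zieg}. The last assertion of the theorem is then an immediate contrapositive consequence: Theorem~\ref{thm-index}, together with the justified interchange of the double sum, gives that the density of primes with $\ind_\p(G)\in S$ equals $\sum_{h\in S}\dens_G(h)$, so the vanishing of every summand forces the total to vanish.

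The heart of the statement is the equivalence $\dens_G(h)>0 \Leftrightarrow \dens_G(\gcd(h,n_0))>0$. My plan is to factor the series through an Euler product. Writing $h=h_0h_1$ with $h_0$ supported on the primes dividing $n_0$ and $(h_1,n_0)=1$, and decomposing the squarefree summation variable $v=v_0v_1$ accordingly, the identity $C(\gcd(vh,n_0))=C(\gcd(v_0h_0,n_0))$ coming from \eqref{n0}, together with the multiplicativity of $\varphi$, splits the series as
$$\dens_G(h)=L_{n_0}(h_0)\cdot M(h_1),$$
where $L_{n_0}(h_0)$ is a \emph{finite} alternating sum over squarefree $v_0\mid\kappa(n_0)$, and $M(h_1)$ is an absolutely convergent Euler product indexed over primes $p\nmid n_0$. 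A direct local computation analogous to that in the proof of Lemma~\ref{productevaluation} shows that each local factor of $M(h_1)$ is strictly positive for $r\ge 1$, so $M(h_1)>0$ always and the positivity of $\dens_G(h)$ is governed entirely by $L_{n_0}(h_0)$.

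To conclude one has to show that the positivity of $L_{n_0}(h_0)$ depends on $h_0$ only through $\gcd(h_0,n_0)=\gcd(h,n_0)$. My approach would be to interpret $L_{n_0}(h_0)$ via Chebotarev as the relative frequency, inside $\Gal(K_{M,M}/K)$ for a suitable $M$ divisible by $n_0h_0$, of Frobenius elements realising at each $p\mid n_0$ precisely the $p$-primary index condition dictated by $h_0$. At primes with $v_p(h_0)\ge v_p(n_0)$, the stabilisation \eqref{n0} of $C$ forces the corresponding factor to split off as a strictly positive local term that coincides with the one obtained by replacing $h_0$ by $\gcd(h_0,n_0)$; at primes with $v_p(h_0)<v_p(n_0)$ the two integers already agree on the nose. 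Applying the factorisation to $\gcd(h,n_0)$ in parallel then yields the equivalence. The main obstacle is precisely this Chebotarev reduction and the careful book-keeping of the Kummer--cyclotomic entanglement at primes of $n_0$; the reason $n_0$ is the correct integer here is exactly that it captures all such entanglement, via \cite[Thm.\,1.1]{PST4}.
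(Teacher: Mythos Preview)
The paper does not give its own proof of this statement: Theorem~\ref{thm-positive} is quoted verbatim from \cite[Main Thm.\ and Rem.\,4.2]{JP} and used as a black box. So there is no proof in the paper to compare against, only the external reference.

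That said, your sketch is sound but you make the last step harder than it is. Your factorisation $\dens_G(h)=L_{n_0}(h_0)\cdot M(h_1)$ is correct, and $M(h_1)>0$ follows from the local computation you indicate. For the remaining step you do \emph{not} need any Chebotarev interpretation or prime-by-prime splitting of $L_{n_0}$. Set $g_0=\gcd(h_0,n_0)=\gcd(h,n_0)$ and $m=h_0/g_0$. Every prime of $m$ already divides $g_0$ (if $p\mid m$ then $v_p(h_0)>v_p(n_0)\ge1$, so $p\mid g_0$), and for every $v_0\mid\kappa(n_0)$ one checks valuation-wise that $\gcd(v_0h_0,n_0)=\gcd(v_0g_0,n_0)$. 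Hence the $C$-values in the two sums coincide term by term, while $\varphi(v_0h_0)(v_0h_0)^r=m^{r+1}\varphi(v_0g_0)(v_0g_0)^r$. This gives the exact identity
\[
L_{n_0}(h_0)=\frac{1}{m^{r+1}}\,L_{n_0}(g_0),
\]
so $\dens_G(h)$ and $\dens_G(\gcd(h,n_0))$ differ by the strictly positive factor $M(h_1)/\bigl(m^{r+1}M(1)\bigr)$, and the equivalence follows immediately. The ``entanglement book-keeping'' you worry about is already absorbed into the single function $C$, which you never need to factor over primes.
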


\begin{prop}\label{prop_coprime}
{\rm (under GRH)}.
If $d\geq2$ is coprime to $n_0$, then $\dens_G(a,d)>0$.
\end{prop}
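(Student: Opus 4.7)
The plan is to use Theorem \ref{thm-positive} to convert positivity of $\dens_G(g)$ for some divisor $g$ of $n_0$ into positivity of $\dens_G(a,d)$, exploiting the hypothesis $\gcd(d,n_0)=1$ through the Chinese Remainder Theorem.

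First I would show that the set
$$\mathcal{D} := \{\, g \mid n_0 \;:\; \dens_G(g) > 0 \,\}$$
is non-empty. Applying Theorem \ref{thm-index} with $F=K$ and $S=\Z_{>0}$, together with Landau's prime ideal theorem \eqref{PIT}, one sees that $\sum_{h\geq 1}\dens_G(h)=1$, since only finitely many primes of $K$ fail to have a well-defined index. If $\mathcal{D}$ were empty, then Theorem \ref{thm-positive} would force $\dens_G(h)=0$ for every $h\geq 1$ (as $\gcd(h,n_0)\mid n_0$), contradicting the fact that the total mass equals $1$. Fix then some $g\in\mathcal{D}$.

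Since $\gcd(d,n_0)=1$, the Chinese Remainder Theorem provides infinitely many positive integers $h$ satisfying simultaneously $h\equiv a\bmod d$ and $h\equiv g\bmod n_0$. For any such $h$, the second congruence together with $g\mid n_0$ yields $\gcd(h,n_0)=\gcd(g,n_0)=g$. By Theorem \ref{thm-positive} we then get $\dens_G(h)>0$, and because $h$ sits in the arithmetic progression $a\bmod d$,
$$\dens_G(a,d)\;\geq\;\dens_G(h)\;>\;0,$$
which is the desired conclusion.

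The only substantive step is the non-emptiness of $\mathcal{D}$, a total-mass argument; everything else is a mechanical CRT handoff that relies critically on the coprimality $\gcd(d,n_0)=1$, which is precisely what guarantees that every residue class modulo $n_0$ can be realised inside the progression $a\bmod d$.
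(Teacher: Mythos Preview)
Your proof is correct and follows essentially the same line as the paper's: locate a divisor $g\mid n_0$ with $\dens_G(g)>0$, then use the Chinese Remainder Theorem (which applies because $\gcd(d,n_0)=1$) to produce an $h\equiv a\bmod d$ with $\gcd(h,n_0)=g$, and conclude via Theorem~\ref{thm-positive}. The only minor differences are cosmetic: the paper obtains the initial positive-density index by invoking the second assertion of Theorem~\ref{thm-positive} directly with $S=\Z_{>0}$ rather than through a total-mass computation, and it parametrises the target as $th_0$ with $t$ chosen via CRT rather than solving for $h$ directly.
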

\begin{proof}
By Theorem \ref{thm-positive} (taking $S$ to be the set of all positive integers) we know that there is some $h\geq 1$ such that $\dens_G(h)>0$. Moreover, we deduce that there is an integer $h_0\mid n_0$ such that for every integer $t$ coprime to $n_0$ we have $\dens_G(th_0)>0$. We conclude by taking $t\equiv 1 \bmod n_0$ and $t\equiv ah_0^{-1}\bmod d$.
\end{proof}

The following result tells us in particular that for every prime number $\ell$ and for every $e\gg 0$ there is a positive density of primes $\mathfrak p$ of $K$ such that
$v_\ell(\ind_\p(G))=e$.

\begin{prop}
For every prime number $\ell$ there is some non-negative integer $e_\ell$ (and we can take $e_\ell=0$ for all but finitely many $\ell$) such that for every $e\geqslant e_\ell$ we have 
$$\dens_G(0,\ell^{e})>\dens_G(0,\ell^{e+1})\,.$$
Under GRH, for every $n>0$ and for every integer $z$, we have $\dens_G(z\ell^{e_\ell},\ell^n)>0$.
\end{prop}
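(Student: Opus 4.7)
\medskip
\noindent\emph{Plan.} The plan is to take $e_\ell:=v_\ell(n_0)$ throughout (so $e_\ell=0$ for every $\ell\nmid n_0$), and to treat the two assertions separately.

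For the monotonicity, the stabilisation property \eqref{n0} shows that $C(\ell^e)=C(\ell^{e+1})=C(\ell^{v_\ell(n_0)})$ whenever $e\geq v_\ell(n_0)$, and then \eqref{simpledegree} together with \eqref{Cndef} yield
\[
\frac{\dens_G(0,\ell^e)}{\dens_G(0,\ell^{e+1})}=\frac{[K_{\ell^{e+1},\ell^{e+1}}:K]}{[K_{\ell^e,\ell^e}:K]}=\frac{\varphi(\ell^{e+1})\ell^{r(e+1)}}{\varphi(\ell^e)\ell^{re}}=\ell^{r+1}>1,
\]
since $r\geq 1$. Because $n_0$ has only finitely many prime divisors, the choice $e_\ell=0$ works for all but finitely many $\ell$.

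The positivity claim hinges on showing that $\dens_G(n_0)>0$. I would prove this by partitioning primes by their exact index: applying Theorem \ref{thm-index} with $F=K$ and $S$ the set of positive multiples of $n_0$, and combining with \eqref{simpledegree}, one obtains
\[
\frac{1}{[K_{n_0,n_0}:K]}=\dens_G(0,n_0)=\sum_{m\geq 1}\dens_G(n_0m).
\]
Were $\dens_G(n_0)$ to vanish, Theorem \ref{thm-positive} would force every $\dens_G(n_0m)$ to vanish as well (since $\gcd(n_0m,n_0)=n_0$), contradicting the positivity of the left-hand side.

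It then remains to produce a positive integer $t$ with $n_0t\equiv z\ell^{e_\ell}\pmod{\ell^n}$, for once this is done $\gcd(n_0t,n_0)=n_0$ and Theorem \ref{thm-positive} give $\dens_G(n_0t)>0$, whence $\dens_G(z\ell^{e_\ell},\ell^n)\geq\dens_G(n_0t)>0$. Writing $n_0=\ell^{e_\ell}N$ with $\gcd(N,\ell)=1$, the congruence is trivially satisfied when $n\leq e_\ell$, while for $n>e_\ell$ it reduces to $Nt\equiv z\pmod{\ell^{n-e_\ell}}$, which is solvable because $N$ is a unit modulo $\ell^{n-e_\ell}$. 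The main conceptual point, and what I expect to be the real obstacle, is the pivotal membership $n_0\in\{d\mid n_0:\dens_G(d)>0\}$; once this is secured, the Chinese-remainder solve and the final appeal to Theorem \ref{thm-positive} are routine.
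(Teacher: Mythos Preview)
Your proof is correct and takes a genuinely different route from the paper's.

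For the first assertion, the paper simply invokes the eventual maximal growth of the Kummer degrees (citing \cite[Lem.~3.2]{PeruccaSgobba}) to obtain \emph{some} $e_\ell$, whereas you make the explicit choice $e_\ell=v_\ell(n_0)$ and compute the degree ratio directly from \eqref{n0} and \eqref{Cndef}. Your approach has the advantage of producing a concrete $e_\ell$ that feeds cleanly into the second part. (One cosmetic slip: when $e=0$ the ratio $\varphi(\ell^{e+1})/\varphi(\ell^e)$ is $\ell-1$ rather than $\ell$, so the degree ratio is $(\ell-1)\ell^r$; the conclusion $>1$ is unaffected.)

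For the second assertion the arguments diverge more substantially. The paper, for each relevant $e\geq e_\ell$, uses the strict inequality just established together with Theorem~\ref{thm-positive} (applied to the set $S$ of integers of $\ell$-adic valuation exactly $e$) to locate some $b$ coprime to $\ell$ with $\dens_G(b\ell^e)>0$, and then multiplies by a prime $q$ coprime to $n_0$ in the right residue class modulo $\ell^n$, invoking Dirichlet's theorem on primes in arithmetic progressions. You instead isolate the single pivotal fact $\dens_G(n_0)>0$ via the identity $\sum_{m\geq 1}\dens_G(n_0m)=1/[K_{n_0,n_0}:K]$ and Theorem~\ref{thm-positive}, and then reach the target class by an elementary congruence solve for $t$. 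Your route is shorter, avoids Dirichlet entirely, and yields a clean standalone lemma ($\dens_G(n_0)>0$); the paper's route, by contrast, does not depend on the particular choice $e_\ell=v_\ell(n_0)$ and would go through for any $e_\ell$ validating the first part.
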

\begin{proof}
By Chebotarev's density theorem 
the primes $\p$ of $K$ such that   $v_\ell(\ind_\p(G))=e$ have density $1/[K_{\ell^e,\ell ^e}:K]-1/[K_{\ell^{e+1},\ell ^{e+1}}:K]$, so the first assertion follows from the eventual maximal growth of the Kummer degrees, see \cite[Lem.\ 3.2]{PeruccaSgobba}.
By the first assertion (and by applying Theorem \ref{thm-positive} to the set $S$ of positive integers having $\ell$-adic valuation equal to $e$) for every $e\geq e_{\ell}$ there is some $b$ coprime to $\ell$ such that $\dens_G(b\ell^e)>0$. Then, for every prime $q$ coprime to $n_0$ we have $\dens_G(qb\ell^e)>0$, so we may conclude by selecting $q\equiv b^{-1} z\ell^{-v_\ell(z)}  \bmod \ell^n$, which is possible by Dirichlet's theorem on primes in arithmetic progressions.
\end{proof}

If $x,y$ are positive integers, then we use the notation $\gcd(x,y^\infty)$ to denote the positive integer obtained from $x$ by removing the prime factors that do not divide $y$.

\begin{thm}\label{thm1}
{\rm (under GRH)}.
We have $\dens_G(a,d)>0$ if and only if 
\[  \dens_G(a,\gcd(d,n_0^\infty))>0. \]
\end{thm}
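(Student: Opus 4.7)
Write $d_1:=\gcd(d,n_0^\infty)$ for the $n_0$-part of $d$ and $d_2:=d/d_1$, so that $\gcd(d_1,d_2)=1$, $\gcd(d_2,n_0)=1$, and consequently $\gcd(d,n_0)=\gcd(d_1,n_0)\mid d_1$. The forward direction is immediate: since $d_1\mid d$, every prime $\p$ with $\ind_\p(G)\equiv a\bmod d$ also satisfies $\ind_\p(G)\equiv a\bmod d_1$, hence $\dens_G(a,d)\leq \dens_G(a,d_1)$, and positivity of the former forces positivity of the latter.

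The plan for the reverse direction is to reduce to Theorem \ref{thm-positive} via a Chinese Remainder argument. Assume $\dens_G(a,d_1)>0$. Applying the second assertion of Theorem \ref{thm-positive} to the set $S:=\{h\geq 1:h\equiv a\bmod d_1\}$ yields some $h\in S$ with $\dens_G(h)>0$; set $h_0:=\gcd(h,n_0)$. By the first assertion of Theorem \ref{thm-positive}, \emph{every} positive integer $h'$ with $\gcd(h',n_0)=h_0$ satisfies $\dens_G(h')>0$, as this condition is equivalent to $\dens_G(h_0)>0$. It therefore suffices to exhibit a positive integer $h'$ with
\begin{equation*}
h'\equiv a\bmod d\qquad\text{and}\qquad h'\equiv h\bmod n_0,
\end{equation*}
since the second congruence automatically forces $\gcd(h',n_0)=\gcd(h,n_0)=h_0$.

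The existence of such an $h'$ is the only delicate point, and it boils down to the Chinese Remainder Theorem: the two congruences are compatible if and only if $h\equiv a\bmod\gcd(d,n_0)$. Because $\gcd(d,n_0)=\gcd(d_1,n_0)$ divides $d_1$ and $h\equiv a\bmod d_1$ by construction, compatibility holds automatically---and this is precisely why $d_1=\gcd(d,n_0^\infty)$ rather than the potentially smaller $\gcd(d,n_0)$ is the natural modulus to take. Any solution $h'\geq 1$ then lies in the progression $a\bmod d$ and satisfies $\dens_G(h')>0$, giving $\dens_G(a,d)\geq \dens_G(h')>0$ as required.
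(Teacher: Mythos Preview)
Your proof is correct and rests on the same two ingredients as the paper's: Theorem~\ref{thm-positive} to extract some $h\equiv a\bmod d_1$ with $\dens_G(h)>0$, and the observation that positivity of $\dens_G(h')$ depends only on $\gcd(h',n_0)$. The execution, however, is cleaner than the paper's. Where the paper modifies $h$ multiplicatively---writing the new index as $th/s$ with $t,s$ coprime to $n_0$ and $s\mid th$, then arranging $th/s\equiv a$ separately modulo $d_0$ and modulo $d/d_0$---you go straight to a single CRT step on the moduli $d$ and $n_0$, using that $\gcd(d,n_0)\mid d_1$ to verify compatibility and that $h'\equiv h\bmod n_0$ forces $\gcd(h',n_0)=\gcd(h,n_0)$. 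This avoids the paper's intermediate claim about adjusting $\gcd(th/s,d/d_0)$ and the somewhat delicate divisibility bookkeeping it requires. The trade-off is nil: your route is shorter and loses nothing.
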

\begin{proof}
Set $d_0=\gcd(d,n_0^\infty)$. The former inequality in the statement clearly implies the latter because the integers congruent to $a\bmod d$ are also congruent to $a \bmod d_0$. Now suppose that there is a positive density of primes $\p$ of $K$ such that $\ind_\p(G)\equiv a\bmod d_0$. From Theorem \ref{thm-positive} we deduce that there exists  $h\geq 1$ such that $h\equiv a\bmod d_0$ and $\dens_G(h)>0$. For 
every pair of positive integers $t,s$ coprime to $n_0$ such that $s\mid th$, we 
have $\dens_G(th/s)>0$.
If we choose $t\equiv s \bmod d_0$, then $th/s\equiv a\bmod d_0$. We claim that we may also choose $t,s$ so that $th/s\equiv a\bmod d/d_0$. Because of the Chinese remainder theorem it will be possible to simultaneously ensure that the two conditions hold, and hence $\dens_G(th/s)>0$ implies $\dens_G(a,d)>0$. To prove the claim, we first choose $t,s$ so that $\gcd(a,d/d_0)=\gcd(th/s,d/d_0)$, and then multiply $t$ by an integer invertible modulo $d/d_0$ to obtain the requested congruence.
\end{proof}

\begin{thm}
{\rm (under GRH)}.
The following conditions are equivalent:  
\begin{enumerate}
    \item the density $\dens_G(a,d)$ is positive; 
    \item there is an integer $A$ such that $A\equiv a\bmod d$ and $\dens_G(\gcd(A,n_0))$ is positive.
\end{enumerate}
\end{thm}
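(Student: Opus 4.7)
The plan is to deduce both implications directly from Theorem \ref{thm-positive}, whose ``iff'' clause handles single integers and whose final assertion handles arbitrary sets $S$. That result is perfectly tailored to the present situation, so no further input will be required.

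For (1)$\Rightarrow$(2), I would apply the final assertion of Theorem \ref{thm-positive} to the set
\[ S := \{ h \ge 1 : h \equiv a \bmod d \}, \]
noting that the hypothesis $\dens_G(a,d) > 0$ is exactly the statement that the density of primes $\p$ with $\ind_\p(G) \in S$ is positive. The theorem then supplies some $A \in S$ with $\dens_G(A) > 0$, and the ``iff'' part applied to $A$ transfers this to $\dens_G(\gcd(A,n_0)) > 0$, giving (2).

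For (2)$\Rightarrow$(1), given an integer $A$ satisfying (2), I would use the Chinese Remainder Theorem to choose a positive integer $h$ with $h \equiv A \bmod \lcm(d,n_0)$. Such an $h$ simultaneously satisfies $h \equiv a \bmod d$ and $\gcd(h,n_0) = \gcd(A,n_0)$, so the ``iff'' clause of Theorem \ref{thm-positive} applied to $h$ yields $\dens_G(h) > 0$. Since every prime counted by $\dens_G(h)$ is also counted by $\dens_G(a,d)$, we conclude $\dens_G(a,d) \ge \dens_G(h) > 0$.

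The concluding remark that $A$ only matters modulo $\lcm(d,n_0)$ is immediate from the shape of the two conditions: $A \equiv a \bmod d$ depends on $A$ only modulo $d$, while $\gcd(A,n_0)$ depends on $A$ only modulo $n_0$. I foresee no genuine obstacle in the argument; the only subtlety is ensuring that the CRT lift $h$ can be chosen positive, which is automatic by adding a sufficiently large positive multiple of $\lcm(d,n_0)$ to any representative.
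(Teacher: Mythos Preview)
Your proof is correct and follows essentially the same route as the paper's: both directions rest solely on Theorem \ref{thm-positive}, using its set version to extract a single $h$ with $\dens_G(h)>0$ and its ``iff'' clause to pass between $\dens_G(h)$ and $\dens_G(\gcd(h,n_0))$. The paper interposes the intermediate quantity $\dens_G(A,\lcm(d,n_0))$ while you go directly, and your appeal to the CRT is superfluous (any positive representative $h\equiv A\bmod \lcm(d,n_0)$, e.g.\ $h=A$ itself when $A>0$, already works), but these are cosmetic differences.
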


\begin{proof}
Write $D:=\lcm(d,n_0)$. By Theorem \ref{thm-positive} the density $\dens_G(a,d)$ is positive if and only if there is an integer $A\equiv a\bmod d$ for which $\dens_G(A,D)>0$. 
The latter holds if and only if there is an index $h$ such that $h\equiv A\bmod D$ and $\dens_G(h)>0$. Since $n_0\mid D$, we have $\gcd(h,n_0)=\gcd(A,n_0)$, and hence by Theorem \ref{thm-positive} we have that $\dens_G(h)$ is positive if and only if $\dens_G(\gcd(A,n_0))$ is positive. 
\end{proof}
Since the properties in (2) only depend on $A$ modulo $\lcm(d,n_0)$, we see
that it actually suffices to consider $A$ modulo  $\lcm(d,n_0)$.

\section{The Artin-type constants $B_{\chi}(r)$}
\label{EPconstants}
Let $r\ge 1$ be an integer. Recall the Euler product definition \eqref{Bchidef} of $B_{\chi}(r)$.
For $r=1$ this was introduced in \cite[Sec.\,6]{moreeFF} and denoted by 
$B_{\chi}$, along with a
variant $A_{\chi}$, where $p$ is restricted to those primes for which
$\chi(p)\ne 0$. We have
$$B_{\chi}(1)=A_{\chi}\prod_{p\mid d}\left(1-\frac{1}{p(p-1)}\right),$$
where $d$ is the modulus of the character. Note that $A_{\chi}=1$ in case $\chi$ is the 
principal character. 
\par If $\chi_0$ is the principal character, then $B_{\chi_0}(r)$ is a rational number. This 
leaves at most $\varphi(d')-1$ linearly independent Artin-type constants, with 
$d'=d/(a,d)$.
For example, in case $d'=3$ and $d'=4$ only one Artin-type constant is 
involved. They are real numbers. As an illustration we point out the result that the average density
of elements of multiplicative order $\pm 1\bmod 3$ equals
$\frac{5}{16}\pm \frac{3}{10}B_{\chi_3}(1)$, where $\chi_3$ 
is the non-principal character modulo $3$ and $B_{\chi_3}(1)=\frac{5}{6}A_{\chi_3}=
0.1449809353580\ldots$,
see \cite{moreeFF}. 
\par Approximating the numerical value of $B_{\chi}(r)$ by computing partial Euler products,
gives a quite poor accuracy. The following result allows us to do rather better and 
generalizes \cite[Thm.\,6]{moreeFF} to arbitrary $r$. 
It involves special values of Dirichlet L-series. Recall that for $\Re(s)>1$ and $\chi$ a 
Dirichlet character, we have
$$L(s,\chi)=\sum_{n=1}^{\infty}\frac{\chi(n)}{n^s}=\prod_p\Big(1-\frac{\chi(p)}{p^s}\Big)^{-1}.$$
\begin{thm}
\label{numeriek}
Let $p_1(=2),p_2,\ldots$ denote the sequence of consecutive primes and $\chi$ be any Dirichlet character. 
Put $$\Lambda_r=A_rL(r+1,\chi)L(r+2,\chi)L(r+3,\chi).$$
Then
$$B_{\chi}(r)=E_{r,n}\,\Lambda_r\,
\prod_{k=1}^n 
\left(1+\frac{\chi(p_k)}{ p_k(p_k^{r+1}-p_k^r-1)}\right)\Big(1-\frac{\chi(p_k)}{ p_k^{r+2}}\Big)
\Big(1-\frac{\chi(p_k)}{ p_k^{r+3}}\Big)$$
with
$$1-\frac{1}{ p_{n+1}^{r+2}}\le |E_{r,n}| \le 1+\frac{1}{ p_{n+1}^{r+2}},$$
provided that $r=1$ and $p_{n+1}\ge 5$, or $r=2$ and $p_{n+1}\ge 3$.
\end{thm}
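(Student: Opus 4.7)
The strategy is to rewrite the slowly-convergent Euler product defining $B_\chi(r)$ as $\Lambda_r$ times a much more rapidly-convergent Euler product, and then truncate at the $n$-th prime. Special values of $L(s,\chi)$ at $s=r+1,r+2,r+3$ are peeled off one at a time, each cancellation improving the decay rate of the remaining local factors, so that truncation leaves a controllably small tail $E_{r,n}$.

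First I would verify the purely algebraic local identity
$$1 + \frac{p(\chi(p)-1)}{(p-1)(p^{r+1}-\chi(p))} = \Bigl(1-\tfrac{1}{p^r(p-1)}\Bigr)\Bigl(1-\tfrac{\chi(p)}{p^{r+1}}\Bigr)^{-1}\Bigl(1+\tfrac{\chi(p)}{p(p^{r+1}-p^r-1)}\Bigr),$$
which after clearing denominators reduces both sides to $(p^{r+2}-p^{r+1}-p+\chi(p))/((p-1)(p^{r+1}-\chi(p)))$. Taking the product over all primes $p$ and using the Euler product representations of $A_r$ (see \eqref{constantAr}) and of $L(r+1,\chi)$, this gives
$$B_\chi(r) = A_r\,L(r+1,\chi)\prod_p\Bigl(1+\tfrac{\chi(p)}{p(p^{r+1}-p^r-1)}\Bigr).$$

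Second, I would insert $(1-\chi(p)/p^{r+2})(1-\chi(p)/p^{r+3})$ into each local factor and compensate globally by multiplying by $L(r+2,\chi)L(r+3,\chi)$ in Euler-product form. This yields $B_\chi(r) = \Lambda_r\prod_p f_p$ with
$$f_p := \Bigl(1+\tfrac{\chi(p)}{p(p^{r+1}-p^r-1)}\Bigr)\Bigl(1-\tfrac{\chi(p)}{p^{r+2}}\Bigr)\Bigl(1-\tfrac{\chi(p)}{p^{r+3}}\Bigr).$$
Splitting $\prod_p f_p$ into the product over the first $n$ primes and the tail $E_{r,n} := \prod_{k\ge n+1}f_{p_k}$ produces exactly the displayed formula of the theorem.

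To bound $|E_{r,n}|$, I would expand $f_p-1$ in descending powers of $p$. Using the geometric series expansion $1/(p(p^{r+1}-p^r-1)) = p^{-(r+2)} + p^{-(r+3)} + O(p^{-(r+4)})$, valid for $r\ge 1$, the linear contributions in $p^{-(r+2)}$ and $p^{-(r+3)}$ from the first factor of $f_p$ cancel exactly against those arising from $(1-\chi(p)/p^{r+2})(1-\chi(p)/p^{r+3})$, yielding $f_p = 1 + O(p^{-(r+4)})$ with an explicit constant depending only on $r$. The two-sided bound on $|E_{r,n}|$ then follows by estimating $\prod_{k\ge n+1}|f_{p_k}|$ via standard partial-sum estimates for $\sum p^{-s}$. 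The main obstacle is producing the fully explicit constants in $1-p_{n+1}^{-(r+2)}\le|E_{r,n}|\le 1+p_{n+1}^{-(r+2)}$ (as opposed to a mere asymptotic bound): this forces one to carefully track multiplicative constants through the Taylor expansions of each $f_p$ and through the tail sum, and the side conditions $(r,p_{n+1})\in\{(1,\ge 5),(2,\ge 3)\}$ mark the thresholds above which these accumulated constants stay at most $1$. I would model this bookkeeping on the $r=1$ argument in \cite[Thm.\,6]{moreeFF}, verifying that each step generalises uniformly in $r$.
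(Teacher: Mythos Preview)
Your proposal is correct and follows essentially the same route as the paper: factor out $A_r L(r+1,\chi)$ via the local identity, accelerate convergence by inserting $(1-\chi(p)/p^{r+2})(1-\chi(p)/p^{r+3})$ and compensating with $L(r+2,\chi)L(r+3,\chi)$, then bound the tail product. The only place where the paper is more concrete than your sketch is the final bookkeeping: rather than appealing abstractly to Taylor expansions, it multiplies out $(1-t-t^{r+1}+yt^{r+2})(1-yt^{r+2})(1-yt^{r+3})/(1-t-t^{r+1})$ exactly, isolates an explicit rational function $G_r(t)$ with $|f_p-1|\le t^{r+3}G_r(t)$, checks by calculus that $G_r(1/p)\le 1$ precisely under the stated side conditions, and then converts $\prod_{p>p_n}(1\pm p^{-(r+3)})$ into the claimed $1\pm p_{n+1}^{-(r+2)}$ by comparison with $\int_{p_{n+1}}^\infty z^{-(r+3)}\,dz$.
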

\begin{proof} 
Recall the definition \eqref{constantAr} of $A_r$.
Noting that
$$(1-yt^{r+1})\left(\frac{1+\frac{(y-1)t^{r+1}}{ (1-yt^{r+1})(1-t)}}{ 1-\frac{t^{r+1}}{1-t}}\right)=
1+\frac{yt^{r+2}}{1-t-t^{r+1}},$$
we obtain 
\begin{equation}
    \label{Bproduct}
B_{\chi}(r)=A_r L(r+1,\chi)\prod_{k=1}^{\infty} 
\left(1+\frac{\chi(p_k)}{ p_k(p_k^{r+1}-p_k^r-1)}\right),
\end{equation}
on setting $y=\chi(p_k)$ and $t=\frac{1}{p_k}$.
We rewrite the infinite product  as
\[
L(r+2,\chi)L(r+3,\chi)\prod_{k=1}^{\infty} 
\left(1+\frac{\chi(p_k)}{ p_k(p_k^{r+1}-p_k^r-1)}\right)\Big(1-\frac{\chi(p_k)}{ p_k^{r+2}}\Big)
\Big(1-\frac{\chi(p_k)}{ p_k^{r+3}}\Big)
\]
in order to improve its convergence. Denoting the $k$-th term in the infinite product  
by $P_{r,k}$, we see that \eqref{Bproduct} holds with $E_{r,n}=\prod_{k\ge n+1}P_{r,k}$.
\par It remains to estimate the relative error $E_{r,n}$ (which
in general is a complex number). Multiplying out
$$(1-t-t^{r+1}+yt^{r+2})(1-yt^{r+2})(1-yt^{r+3})/(1-t-t^{r+1})$$
gives 
$$1+\frac{yt^{r+4}}{1-t-t^{r+1}}\Big(1+t^{r-1}+(1-y)t^r-yt^{r+2}-yt^{2r+2}+y^2t^{2r+3}\Big),$$
and leads to the estimate
$$  |P_{r,k}|\le 1+t^{r+3}G_r(t),$$
with 
$$G_r(t)=\frac{t(1+t^{r-1}+2t^r+t^{r+2}+t^{2r+2}+t^{2r+3})}{1-t-t^{r+1}}$$
and $t=\frac{1}{p_k}$. 
Note that $G_r(t)$ is increasing in $t$ and decreasing in $r$ in the region
$0<t<1$ and $r\ge 1$.
Thus $$|P_{r,k}|\le 1+p_k^{-r-3}G_r(p_k^{-1})\le 1+p_k^{-r-3}G_r(p_{n+1}^{-1})\quad \text{~for~every~}k\ge n+1.$$
As $t$ tends to zero, $G_r(t)$ tends to zero, and so we can choose $n$ so large that
$G_r(p_{n+1}^{-1})\le 1$.
Now $$|E_{r,n}|=\prod_{k\ge n+1}|P_{r,k}|<\prod_{p>p_n}\left(1+\frac{1}{ p^{r+3}}\right)<1+\sum_{m>p_n}\frac{1}{ m^{r+3}}.$$
Comparing the sum with an integral leads to the final estimate
$$|E_{r,n}|\le 1+\frac{1}{ p_{n+1}^{r+3}}+\int_{p_{n+1}}^{\infty}\frac{dz}{
z^{r+3}}\le 1+\frac{1}{ p_{n+1}^{r+2}},$$
where the sum is over the integers $m>p_n$.
Similarly, 
 $$|E_{r,n}|>\prod_{p>p_n}\left(1-\frac{1}{ p^{r+3}}\right)>1-\sum_{m>p_n}\frac{1}{ m^{r+3}}>1-\frac{1}{ p_{n+1}^{r+2}}.$$
Some calculus shows that $G_r(\frac{1}{p})\le 1$ if and only if $r=1$ and $p\ge 5$ or $r\ge 2$ and $p\ge 3$.
The proof is now completed on invoking the if-part of this statement.
\end{proof}
\begin{rem}
In the proof of \cite[Thm.\,6]{moreeFF} there are a few typos:\\
For ``$2+2t+t^3+t^5$'' read ``$2+2t+t^3+t^4+t^5$''.\\
For ``$t\ge 127$'' read ``$t\le 1/127$''.\\
For ``$p_{n+!}$'' read ``$p_{n+1}$''.
\end{rem}

 \section{Two examples}
 \label{sec:examples}
In this section we demonstrate our results by two relatively easy, but illustrative, examples for 
$K=\mathbb Q(\sqrt{5})$, $r=1$ and $d=5$. 
Some examples for the same $r$ and $d$ values, but with
$K=\mathbb Q$
are given in  Moree \cite[Table 2]{moreeII}.

\begin{table}[h!]
\centering\small
\begin{tabular}{c||ccccc}
\multirow{2}{*}{$G$}  & $\dens_G(0,5)$ &  $\dens_G(1,5)$ & $\dens_G(2,5)$ & $\dens_G(3,5)$ & $\dens_G(4,5)$ \\ [0.5ex]
  & $\frac{P_{0,5}(10^6)}{\pi_K(10^6)}$ & $\frac{P_{1,5}(10^6)}{\pi_K(10^6)}$ & $\frac{P_{2,5}(10^6)}{\pi_K(10^6)}$ & $\frac{P_{3,5}(10^6)}{\pi_K(10^6)}$ & $\frac{P_{4,5}(10^6)}{\pi_K(10^6)}$ \\ [1ex] 
\hline
\multirow{2}{*}{$\langle \frac{1+\sqrt{5}}2\rangle$}
 &  $0.100000$ & $0.418205$ & $0.296724$ & $0.0950872$ & $0.0899840$ \\
  & $0.100093$ & $0.419351$ & $0.296954$ & $0.0947177$ & $0.0888838$ \\[0.5ex] 
  \hline
  \multirow{2}{*}{$\langle  -\frac{5+\sqrt{5}}{2} \rangle$}
 & $0.100000$ & $0.451872$ & $0.266393$ & $0.0995570$ & $0.0821785 $    \\ 
  & $0.099787$ & $0.450979$ & $0.267518$ & $0.0996599$ & $0.0820564$ \\[0.5ex] 
\end{tabular}
\caption{Examples of densities $\dens_G(a,5)$ with $K=\Q(\sqrt{5})$}
\label{table1}
\end{table}

In Table \ref{table1} we denote by $P_{a,d}(x)$ the number of primes $\p$ of $K$ of norm 
up to $x$ such that $\ind_\p(G)\equiv a\bmod d$, and by $\pi_K(x)$ the number of primes $\p$ of $K$ with norm up to $x$.
The top row gives the theoretical density, the second row an experimental approximation (both with rounding
of the final decimal).

\par We will now treat these
two examples without using the machinery of Section \ref{sec:APdensity} (however, 
with complicated enough examples this becomes unavoidable). Our approach requires some further notation.
Given a divisor $\delta$ of an integer $d_1$, we put
   $$\rho_{\delta,d_1}(a,d):=\sum_{t\equiv a\bmod d}\ \sum_{\substack{v\geq1\\ (v,d_1)=\delta}}\frac{\mu(v)}{vt\varphi(vt)}.$$
\subsection{First example}
\begin{prop}
\label{prop:first}
Set $K=\Q(\sqrt{5})$ and $G=\langle \frac{1+\sqrt{5}}2\rangle$.
We have $$\dens_G(0,5)=\frac{1}{10}=2\rho(0,5)$$
and, for $1\le a\le 4$, assuming GRH,
$$\dens_G(a,5)=\frac{18}{19}\rho(a,5)=\frac{9}{38}\Big(\frac{19}{20}+\overline{\psi(a)}B_{\psi}(1)+\psi(a)B_{\psi^3}(1)
+\psi^2(a)B_{\psi^2}(1)\Big).$$
\end{prop}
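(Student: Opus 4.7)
The plan is to bypass the machinery of Section~\ref{sec:APdensity} by computing the Kummer degrees $[K_{n,n}:K]$ explicitly for $G=\langle (1+\sqrt 5)/2\rangle$ and then unravelling the defining series \eqref{densdoublesum} by splitting on whether $5\mid v$. Since $\sqrt 5\in\Q(\zeta_5)$, we have $K\subset\Q(\zeta_n)$ precisely when $5\mid n$, giving $[K(\zeta_n):K]=\varphi(n)/2$ if $5\mid n$ and $\varphi(n)$ otherwise. Using the arithmetic of the fundamental unit $\alpha:=(1+\sqrt 5)/2$, one verifies that the Kummer part $[K(\zeta_n,\alpha^{1/n}):K(\zeta_n)]$ always attains its maximal value $n$, so
$$[K_{n,n}:K]=\begin{cases} n\varphi(n) &\text{if }5\nmid n,\\ n\varphi(n)/2 &\text{if }5\mid n.\end{cases}$$
In the language of Section~\ref{sec:APdensity} this says $n_0=5$, with $C(n)=1$ or $2$ according to whether $5\nmid n$ or $5\mid n$.

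For $a=0$, identity~\eqref{simpledegree} gives $\dens_G(0,5)=1/[K_{5,5}:K]=1/10$; since Proposition~\ref{null} yields $\rho(0,5)=1/20$, this is exactly $2\rho(0,5)$. For $1\le a\le 4$ every $t\equiv a\bmod 5$ is coprime to $5$, so $5\mid vt$ iff $5\mid v$, and splitting the inner sum of \eqref{densdoublesum} accordingly gives
$$\dens_G(a,5)=\rho_{1,5}(a,5)+2\rho_{5,5}(a,5)=\rho(a,5)+\rho_{5,5}(a,5).$$
The substitution $v=5u$ with $5\nmid u$ (so $\mu(5u)=-\mu(u)$) together with $\varphi(5ut)=4\varphi(ut)$ yields $\rho_{5,5}(a,5)=-\tfrac{1}{20}\rho_{1,5}(a,5)$; combining with $\rho_{1,5}(a,5)=\rho(a,5)-\rho_{5,5}(a,5)$ forces $\rho_{5,5}(a,5)=-\tfrac{1}{19}\rho(a,5)$, whence $\dens_G(a,5)=\tfrac{18}{19}\rho(a,5)$.

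The second asserted equality then follows by inserting the closed form \eqref{rhoB} for $\rho(a,5)$: with $\psi$ a generator of the character group modulo $5$ one has $\overline{\psi^2(a)}=\psi^2(a)$ and $\overline{\psi^3(a)}=\psi(a)$, and the Euler product definition of $B_\chi(1)$ gives $B_{\chi_0}(1)=1-1/(5\cdot 4)=19/20$ for the principal character. Multiplying the resulting expression by $\tfrac{18}{19}\cdot\tfrac{1}{4}=\tfrac{9}{38}$ produces the displayed formula. The main obstacle in the whole argument is the maximality claim $[K(\zeta_n,\alpha^{1/n}):K(\zeta_n)]=n$; once this is granted the rest reduces to careful bookkeeping with the double sum. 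Certifying this maximality requires a short but genuinely arithmetic verification that $\alpha$ admits no nontrivial radical in any cyclotomic extension of $K$ beyond the visible entanglement from $\sqrt 5\in\Q(\zeta_5)$.
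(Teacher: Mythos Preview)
Your argument is correct and follows essentially the same route as the paper: both compute the degrees $[K_{n,n}:K]$, split the double sum \eqref{densdoublesum} via $\rho_{1,5}$ and $\rho_{5,5}$, use the relation $\rho_{5,5}(a,5)=-\tfrac{1}{20}\rho_{1,5}(a,5)$ to get $\dens_G(a,5)=\tfrac{18}{19}\rho(a,5)$, and then invoke \eqref{rhoB}. You are slightly more explicit than the paper in justifying the degree formula and in evaluating $B_{\chi_0}(1)=19/20$, and you rightly flag the Kummer-maximality $[K(\zeta_n,\alpha^{1/n}):K(\zeta_n)]=n$ as the one genuinely arithmetic input---the paper simply asserts the degree formula without comment.
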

\begin{proof} The first claim follows from \eqref{simpledegree} and Proposition \ref{null}. 
Next assume that $1\le a\le 4$. 
We have $\rho(a,5)=\rho_{1,5}(a,5)+\rho_{5,5}(a,5)$.
If $5\nmid t$, then
$$\sum_{5\mid v}\frac{\mu(v)}{vt\varphi(vt)}=\sum_{w}\frac{\mu(5w)}{5wt\varphi(5wt)}=
-\frac{1}{20}\sum_{5\nmid w}\frac{\mu(w)}{wt\varphi(wt)}.$$
We conclude that $\rho_{5,5}(a,5)=-\frac{1}{20}\rho_{1,5}(a,5)$. 
It thus follows that $\rho_{1,5}(a,5)=\frac{20}{19}\rho(a,5)$ and $\rho_{5,5}(a,5)=-\frac{1}{19}\rho(a,5)$.
Since the degree $[K_{n,n}:K]$ equals $\varphi(n)n$ if $5\nmid n$ and $\frac{1}{2}n\varphi(n)$ otherwise, we 
infer that
$\dens_G(a,5)=\rho_{1,5}(a,5)+2\rho_{5,5}(a,5)=\frac{18}{19}\rho(a,5)$. The proof is completed on invoking 
    \eqref{rhoB} and noting that $\psi^2(a)$ is real and $\overline{\psi^3(a)}=\psi(a)$.
\end{proof}
\par Approximations to $B_{\chi}(1)$ can be 
found in Table \ref{table2},
where $\psi$ denotes the character modulo $5$ determined uniquely by
$\psi(2)=i$.

\begin{table}[h!]
\centering\small
\begin{tabular}{c||c}
$\chi$  & $B_{\chi}(1)$ \\ [0.5ex] 
\hline
$\psi$ & $0.34645514515465\ldots+i\cdot 0.21283903970350\ldots$    \\
$\psi^2$ & $0.12284254160167\ldots$    \\
$\psi^3$ & $0.34645514515465\ldots-i\cdot 0.21283903970350\ldots$    \\
$\psi^4$ & $0.95$    \\ [0.5ex] 
\end{tabular}
\caption{The constants $B_{\chi}(1)$ for $d=5$}
\label{table2}
\end{table}

The character group has $\psi,\psi^2,\psi^3$ and $\psi^4$ as 
elements, with $\psi^4$ being the principal character. 
The table
is taken from \cite[Table 3]{moreeFF}, where 
for $d\le 12$ further approximations can be found. It was kindly
verified by Alessandro Languasco using Theorem \ref{numeriek} with $n=10^6$.

\subsection{Second example}
\begin{prop}
Set $K=\Q(\sqrt{5})$ and $G=\langle  -\frac{5+\sqrt{5}}{2} \rangle$.
Let $1\le a\le 4$.
One of $a$ and $a+5$ is even. Denoting this number by $a_1$, assuming GRH, we have
$$\dens_G(a,5)=\frac{20}{19}\rho(a,5)-\frac{4}{19}\rho(a_1,10).$$
Furthermore, $\dens_G(0,5)=\frac{1}{10}$.
\end{prop}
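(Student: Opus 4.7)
The plan is to mimic the decomposition in Proposition~\ref{prop:first}, augmented by an extra Kummer degeneracy reflecting the multiplicative structure of $g$. The key observation is the identity
\[
(\zeta_5-\zeta_5^{-1})^2 \;=\; \zeta_5^2+\zeta_5^{-2}-2 \;=\; -\varphi-2 \;=\; -\tfrac{5+\sqrt{5}}{2} \;=\; g,
\]
so $g$ becomes a square in $\Q(\zeta_5)\subseteq K(\zeta_n)$ whenever $5\mid n$. Combining this with the cyclotomic degeneracy ($[K(\zeta_n):K]=\varphi(n)/2$ precisely when $5\mid n$, because $\sqrt{5}\in\Q(\zeta_5)$), I would record the degree table
\[
[K_{n,n}:K] \;=\; \begin{cases} n\varphi(n) & \text{if } \gcd(n,10)\in\{1,2\},\\ \tfrac{1}{2}\, n\varphi(n) & \text{if } \gcd(n,10)=5,\\ \tfrac{1}{4}\, n\varphi(n) & \text{if } 10\mid n,\end{cases}
\]
so that $n_0=10$ in the notation of \eqref{n0}. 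The claim $\dens_G(0,5)=1/10$ then follows at once from \eqref{simpledegree} and $[K_{5,5}:K]=10$.

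For $1\le a\le 4$ one has $5\nmid t$ throughout $\dens_G(a,5)=\sum_{t\equiv a\bmod 5}\sum_{v\ge 1}\mu(v)/[K_{vt,vt}:K]$, so $5\mid vt$ is equivalent to $5\mid v$. I would split the sum on this condition, and inside the $5\mid v$ subsum further on the parity of $vt$. Writing $v=5w$ with $5\nmid w$ (using $\mu(5w)=-\mu(w)$ and $\varphi(5wt)=4\varphi(wt)$) and then $w=2w'$ in the odd-$t$ subcase, a short bookkeeping yields
\[
\dens_G(a,5) \;=\; \tfrac{9}{10}\rho_{1,5}(a,5) \;-\; \tfrac{1}{10}\rho_{1,5}(a_1,10) \;+\; \tfrac{1}{20}\rho_{1,10}(a_2,10),
\]
where $a_2$ denotes the odd representative of $\{a,a+5\}$.

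The final step converts each restricted Artin sum into a value of $\rho$ by the same $\delta$-dissection as in Proposition~\ref{prop:first}: namely $\rho_{1,5}(a,5)=\tfrac{20}{19}\rho(a,5)$, $\rho_{1,5}(a_1,10)=\tfrac{20}{19}\rho(a_1,10)$, and $\rho_{1,10}(a_2,10)=\tfrac{40}{19}\rho(a_2,10)$. Substituting and invoking the evident identity $\rho(a,5)=\rho(a_1,10)+\rho(a_2,10)$ to eliminate $\rho(a_2,10)$ collapses the expression to $\tfrac{20}{19}\rho(a,5)-\tfrac{4}{19}\rho(a_1,10)$, as desired. The delicate point will be justifying the degree table — that $n_0$ really equals $10$, with no further Kummer degeneracy. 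For the Kummer part one must show that $u=\zeta_5-\zeta_5^{-1}$ is not an $\ell$-th power in any $K(\zeta_n)$: for $\ell=2$, $\sqrt{u}$ would otherwise generate a non-abelian extension of $\Q$ (via the conjugate relation $(\zeta_5-\zeta_5^{-1})(\zeta_5^2-\zeta_5^{-2})=-\sqrt{5}$, which would force $5^{1/4}$ into a cyclotomic field); for odd primes $\ell$ the claim reduces to a valuation argument at the prime of $\Q(\zeta_5)$ above $5$.
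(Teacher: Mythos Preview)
Your proposal is correct and follows essentially the same route as the paper. Both proofs rest on the identical degree table (the paper records it via the observation $\Q\bigl(\sqrt{-\tfrac{5+\sqrt5}{2}}\bigr)=\Q(\zeta_5)$, which is exactly your identity $(\zeta_5-\zeta_5^{-1})^2=g$), and both reduce the computation to the quantities $\rho_{\delta,d_1}(a,d)$ together with the relation $\rho_{1,5}=\tfrac{20}{19}\rho$ already used in Proposition~\ref{prop:first}. The only organizational difference is that the paper first splits $\dens_G(a,5)=\dens_G(a_1,10)+\dens_G(a_2,10)$ and evaluates each piece separately (obtaining $\tfrac{16}{19}\rho(a_1,10)$ and $\tfrac{20}{19}\rho(a_2,10)$), whereas you keep the modulus~$5$ and split the inner $v$-sum on the parity of $vt$; the two bookkeepings are equivalent and collapse to the same expression after invoking $\rho(a,5)=\rho(a_1,10)+\rho(a_2,10)$. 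Your added sketch that $n_0=10$ (i.e.\ that no further Kummer degeneracy occurs) is a welcome supplement---the paper simply asserts the degree table without proof.
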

\begin{proof}
Using \eqref{simpledegree} we see that $\dens_G(0,5)=\frac{1}{10}$. We will determine $\dens_G(a,10)$ in 
case $5\nmid a$. The result then follows on adding $\dens_G(a,10)$ and $\dens_G(a+5,10)$.
\par Since $\Q\Big(\sqrt{-\frac{5+\sqrt{5}}{2}}\Big)=\Q(\zeta_5)$,
the degree $[K_{n,n}:K]$ equals $\varphi(n)n$ if $5\nmid n$, it equals $\frac{1}{2}n\varphi(n)$ if $(n,10)=5$, and it equals $\frac{1}{4}n\varphi(n)$ if $10\mid n$. 
These degree considerations lead to 
$$
\dens_G(a,10)=
\begin{cases}
\rho_{1,5}(a, 10)+4\rho_{5,5}(a, 10) & \text{ if } 2\mid a;\\
\rho_{1,5}(a, 10)+2\rho_{5,10}(a,10)+4\rho_{10,10}(a, 10) & \text{ if } 2\nmid a.
\end{cases}
$$
\par Reasoning as in the proof of 
Proposition \ref{prop:first} we deduce that $\rho_{5,5}(a,10)=-\frac{1}{20}\rho_{1,5}(a,10)$ and 
$\rho_{1,5}(a,10)=\frac{20}{19}\rho(a,10)$. It follows that
$\dens_G(a,10)=\frac{4}{5}\rho_{1,5}(a,10)=\frac{16}{19}\rho(a,10)$ in case $a$ is even.
\par If $a$ is odd, then so are the 
integers $t\equiv a\bmod 10$ and so $\rho_{10,10}(a,10)=-\frac{1}{2}\rho_{5,10}(a,10)$,
leading to $\dens_G(a,10)=\rho_{1,5}(a,10)$.
Reasoning as in the proof of 
Proposition \ref{prop:first}, we then deduce that $\dens_G(a,10)=\frac{20}{19}\rho(a,10)$.
\end{proof}
For reasons of space we refrain here from explicitly writing out $\dens_G(a,5)$ as a linear sum
in the $B_{\chi}$'s, but we will indicate
how this is done. For $\rho(a,5)$ we use \eqref{rhoB}. For $a$ with 
$5\nmid a$ we have by \cite[Prop.~6]{moreeFF} with $w=5$ and $\delta=2$,
$$\rho(2a,10)=\frac{3}{8}\sum_{\chi \bmod 5}\overline{\chi(a)}\frac{B_{\chi}(1)}{2+\chi(2)}\,.$$

\section*{Acknowledgments}
This project was started when the first author gave a talk in the Luxembourg Number Theory Day 2019.
He thanks the other authors for the invitation and for several subsequent invitations. The second and third author thank the Max Planck Institut f\"ur Mathematik and the first author for organizing a short visit in October 2022.
Thanks are also due to Alessandro Languasco for verifying Table \ref{table2} using Theorem \ref{numeriek}. 
We thank Valentin Blomer for pointing out reference \cite{ambrose}.

\end{document}